\newcommand{\Rr}{{\mathbb{R}}}
\newcommand{\Nn}{{\mathbb{N}}}
\newcommand{\Tt}{{\mathbb{T}}}
\newcommand{\Hh}{{\overline{H}}}
\newcommand{\Ll}{{\mathcal{L}}}
\newcommand{\Dd}{{\mathcal{D}}}
\newcommand{\mbfm}{\boldsymbol{m}}
\newcommand{\mbfu}{\boldsymbol{u}}
\newcommand{\epsi}{\varepsilon}
\def\d{{\rm d}}
\def\dx{{\rm d}x}
\def\dy{{\rm d}y}
\def\dt{{\rm d}t}
\def\leq{\leqslant}
\def\geq{\geqslant}
\newcommand{\RR}{\mathbb{R}}
\newcommand{\NN}{\mathbb{N}}
\numberwithin{equation}{section}
\newtheoremstyle{thmlemcorr}{10pt}{10pt}{\itshape}{}{\bfseries}{.}{10pt}{{\thmname{#1}\thmnumber{
#2}\thmnote{ (#3)}}}
\newtheoremstyle{thmlemcorr*}{10pt}{10pt}{\itshape}{}{\bfseries}{.}\newline{{\thmname{#1}\thmnumber{
\newtheoremstyle{defi}{10pt}{10pt}{\itshape}{}{\bfseries}{.}{10pt}{{\thmname{#1}\thmnumber{
#2}\thmnote{ (#3)}}}
\newtheoremstyle{remexample}{10pt}{10pt}{}{}{\bfseries}{.}{10pt}{{\thmname{#1}\thmnumber{
#2}\thmnote{ (#3)}}}
\newtheoremstyle{ass}{10pt}{10pt}{}{}{\bfseries}{.}{10pt}{{\thmname{#1}\thmnumber{
A#2}\thmnote{ (#3)}}}
\theoremstyle{thmlemcorr}
\newtheorem{theorem}{Theorem}
\numberwithin{theorem}{section}
\newtheorem{lemma}[theorem]{Lemma}
\newtheorem{proposition}[theorem]{Proposition}
\theoremstyle{thmlemcorr*}
\newtheorem{theorem*}{Theorem}
\newtheorem{lemma*}[theorem]{Lemma}
\newtheorem{corollary*}[theorem]{Corollary}
\newtheorem{proposition*}[theorem]{Proposition}
\newtheorem{problem*}[theorem]{Problem}
\newtheorem{conjecture*}[theorem]{Conjecture}
\theoremstyle{defi}
\newtheorem{definition}[theorem]{Definition}
\theoremstyle{remexample}
\newtheorem{remark}[theorem]{Remark}
\newtheorem{pro}[theorem]{Proposition}
\theoremstyle{ass}
\begin{document}

\title{Two numerical approaches to stationary mean-field games}
\author{Noha Almulla, Rita Ferreira, and Diogo Gomes } 

\begin{abstract}
Here, we consider numerical methods for stationary mean-field games (MFG)
and investigate two classes of algorithms. The first 
one is a gradient-flow method based on 
the variational characterization of certain MFG. 
The second one uses monotonicity
properties of MFG. 
We illustrate our methods with various examples, including one-dimensional periodic MFG, congestion problems, and higher-dimensional models. 
\end{abstract}

\thanks{
The authors were partially supported by King Abdullah
University of Science and Technology baseline and start-up funds and by 
KAUST SRI, Center for Uncertainty Quantification in Computational Science and Engineering. }

\address{N. Almulla,  University of Dammam,
College of Science, King Faisal Road, Dammam
Saudi Arabia }
\address{\vskip-6mm R. Ferreira and D. Gomes,  King Abdullah University
of Science
and Technology (KAUST), CEMSE Division \&
KAUST SRI, Center for
Uncertainty Quantification  in Computational
Science and Engineering,  Thuwal 23955-6900,
Saudi Arabia. \vskip.5mm
}

\email[N.~Almulla]{nalmulla@uod.edu.sa}
\email[R.~Ferreira]{rita.ferreira@kaust.edu.sa}
\email[D.~Gomes]{diogo.gomes@kaust.edu.sa}

\maketitle

\section{Introduction}

Mean-field games (MFG) model problems with a large number of rational agents interacting non-cooperatively \cite{ll1, ll2, ll3, Caines1, Caines2}.
Much progress has been achieved in the mathematical theory of MFG  for time-dependent problems \cite{porretta, porretta2, cgbt, GPM1, GPM2, GPM3,  GPim1, GPim2,  GVrt2} and for stationary problems \cite{GM, GMit,MR3113415,  GPat,GPatVrt, PV15} (also see the recent surveys \cite{cardaliaguet, MR3195844, bensoussan}). Yet, in the absence of explicit solutions, 
the efficient simulation of  MFG is  importance to many applications. Consequently, researchers have studied numerical algorithms
in various cases, including 
continuous state problems
\cite{AP, achdou2013finite, CDY, DY, MR2928376, MR2928382, MR2974160,
MR3148086, MR2928379, MR2976439, MR3146865} and
 finite state problems \cite{GVW-dual, Gomes:2014kq}. 
Here, we develop numerical methods for (continuous state) stationary MFG using variational and monotonicity methods. 

Certain MFG, called variational MFG, are Euler-Lagrange equations of integral functionals. These MFG are  instances of a wider class of problems -- monotonic MFG. 
In the context of numerical methods, the variational structure of MFG was explored in \cite{CDY}. Moreover, 
monotonicity properties are 
critical for the convergence of the methods in \cite{achdou2013finite, DY, MR2928376}.
Recently, variational and monotonicity methods were used to prove the existence of weak solutions to MFG in, respectively, \cite{FrS, San12, MR3358627, cgbt} and \cite{FG2}. 

Here, our  main contributions are 
two computational approaches for MFG.
For variational MFG, we build 
an approximating method using a gradient flow approach.
This technique gives a simple and efficient algorithm.
Nevertheless, the class of variational MFG is somewhat restricted. 
Monotonic MFG encompass a wider range of problems
that include variational MFG as a particular case. 
In these games, 
the MFG equations involve a monotone nonlinear operator. We use the monotonicity to 
 build a flow that is a contraction in $L^2$ and whose fixed points solve the MFG.

To keep the presentation elementary,  
we develop our methods for the one-dimensional MFG:
\begin{eqnarray}\label{1dsmfg}
\begin{aligned}
\begin{cases}
\displaystyle \frac{u_x^2}{2} + V(x)+b(x) u_x =\ln m+\Hh,\\
-(m (u_x+b(x)))_x =0.
\end{cases}
\end{aligned}
\end{eqnarray}
To streamline the discussion, we study \eqref{1dsmfg} 
with periodic boundary conditions. Thus, the variable $x$
takes values in the one-dimensional torus,
$\Tt$. The potential, $V$, and the drift, $b$, are
given real-valued periodic functions. The unknowns
are $u$, $m$, and  $\Hh$, where \(u\) and \(m
\) are  real-valued periodic functions satisfying $m> 0$, and where   $\Hh$
is a constant. The role of \(\overline H\)
is to allow for $m$ to satisfy $\int_{\Tt} m\,\dx=1$.
Furthermore, 
since adding  an arbitrary constant to $u$ does not change \eqref{1dsmfg}, we require
\begin{equation}
\label{int0}
\int_{\Tt} u \,\dx=0. 
\end{equation}
The system \eqref{1dsmfg} is one of the simplest MFG models. However, its structure is quite rich 
and illustrates our techniques well.
Our methods extend in a straightforward way to other models, including higher-dimensional problems. 
In particular, in Section \ref{impsec}, we discuss applications to
a one-dimensional congestion model and to a two-dimensional MFG.

We end this introduction with a brief outline of our work. In Section~\ref{eprop}, we examine 
various properties of \eqref{1dsmfg}. These properties motivate the 
ideas used in Section~\ref{discrete} to build numerical methods. 
Next, in Section~\ref{impsec}, we discuss the implementation of our approaches and present their numerical simulations.
We conclude this work in Section~\ref{conclusions}
with some final remarks.

\section{Elementary properties}
\label{eprop}

We begin this section by constructing explicit solutions to \eqref{1dsmfg}. 
These are of particular importance for the validation
and comparison of the numerical methods presented
in Section~\ref{discrete}. Next, 
we discuss the variational structure  of \eqref{1dsmfg} and show that \eqref{1dsmfg} is equivalent to the Euler-Lagrange equation of a suitable functional.
Because of this, we introduce a gradient flow approximation and examine some of its elementary properties. Finally, we explain how
 \eqref{1dsmfg} can be seen as a monotone operator.
 This operator induces 
 a flow that is a contraction in $L^2$ and whose stationary points are solutions to \eqref{1dsmfg}. 
 
\subsection{Explicit solutions}

Here, we build explicit solutions to \eqref{1dsmfg}. 
For simplicity, we
assume that $V$ and $b$ are $C^\infty$ functions. Moreover, we identify \(\Tt\) with the interval
\([0,1]\).

Due to the one-dimensional nature of 
\eqref{1dsmfg}, 
if $\int_\Tt b\,\dx=0$,
we have the following explicit solution\begin{align*}
&u(x)=-\int_0^x b(y)\,\dy + \int_\Tt \int_0^x b(y)\,\dy\, \dx,\qquad m(x)=\frac{e^{V(x)-\frac{b^2(x)}{2}}}{\int_{\Tt} e^{V(y)-\frac{b^2(y)}{2}} \,\dy}, \\
& \overline
H=\ln\left( \int_{\Tt}
e^{V(y)-\frac{b^2(y)}{2}} \,\dy\right).
\end{align*}
Suppose that $b=\psi_x$ for some $C^\infty$
 and periodic function $\psi:\Tt\to \Rr$  with $\int_{\Tt} \psi\,\dx=0$.
For
\[
u(x)=\psi(x),\qquad m(x)=\frac{e^{V(x)-\frac{\psi_x^2(x)}{2}}}
{\int_{\Tt} e^{V(y)-\frac{\psi_y^2(y)}{2}} \,\dy},
\qquad H=\ln\left( \int_{\Tt}
e^{V(y)-\frac{\psi_y^2(y)}{2}} \,\dy \right),
\]
the triplet $(u,m, \overline H)$
solves \eqref{1dsmfg}. If $\int_\Tt b\,\dx \neq 0$, we are not aware of any closed-form solution.  
        
Next, we consider the congestion model 
\begin{eqnarray}\label{1dsmfgcon}
\begin{aligned}
\begin{cases}
\displaystyle \frac{u_x^2}{2 m^{1/2}} + V(x) =\ln m+\Hh,\\
-(m^{1/2} u_x)_x =0.
\end{cases}
\end{aligned}
\end{eqnarray}
Remarkably, 
the previous equation has the same solutions as \eqref{1dsmfg} with $b=0$. 
Namely,  
for $u(x)=0$, $m(x)=\frac{e^{V(x)}}{\int_{\Tt} e^{V(y)} \,\dy}$, and \(\overline H = \ln\left(
\int_{\Tt}
e^{V(y)} \,\dy \right) \), the triplet 
 $(u,m, \overline H)$ 
solves \eqref{1dsmfgcon}.

\subsection{Euler-Lagrange equation}

We begin by showing  that \eqref{1dsmfg} is equivalent to the Euler-Lagrange equation of 
the integral functional
\begin{equation}
\label{J}
J[u]= \int_\Tt e^{\frac{u_x^2}{2} + V(x)+b(x) u_x}\, \dx
\end{equation}
defined for $u\in D(J)=   W^{2,2}(\Tt)  \cap   L^2_0(\Tt)$, where $ L^2_0(\Tt)= \{u\in L^2(\Tt)\!: u \hbox{ satisfies \eqref{int0}}\}$.

\begin{remark}[On the domain of $J$]\label{regmin}
As proved in \cite{E1} (see \cite{E2}, \cite{GIMY},  \cite{GPatVrt},  \cite{GPM1}, \cite{GM}, and \cite{PV15} for related problems),
\eqref{1dsmfg} admits a $C^\infty$ solution. By a simple convexity argument, this solution is the unique minimizer of 
{\setlength\arraycolsep{0.5pt}
        \begin{eqnarray*}        \begin{aligned}
        J[u] = \min_{v\in W^{1,2}(\Tt) , \int_\Tt v\,\dx
                =0} J[v].
        \end{aligned}
        \end{eqnarray*}}Thus,  the minimizers of $J$ in $\{ v\in W^{1,2}(\Tt) \!:\, \int_\Tt v\,\dx = 0\}$ are also minimizers of $J$ in $\{ v\in  W^{2,2}(\Tt)
\!:\, \int_\Tt v\,\dx = 0\}$. Thus, the domain of $J$ is not
too restrictive, and, due to this choice, our arguments are substantially simplified. 
In particular, because we are in the one-dimensional setting, $W^{2,2}(\Tt) \subset W^{1,\infty}(\Tt)$. 

We also observe that $L^2_0(\Tt)$ endowed with the $L^2(\Tt)$-inner product is a Hilbert space.
\end{remark}

\begin{lemma}\label{ELcont}
For $\Hh=0$, \eqref{1dsmfg} is equivalent to 
the Euler-Lagrange equation of $J$.
\end{lemma}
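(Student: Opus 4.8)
The plan is to compute the first variation of $J$, read off its Euler--Lagrange equation in strong form, and then recognize it as the second equation of \eqref{1dsmfg} once $m$ is defined through the first equation with $\Hh=0$.

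First I would write the integrand as $e^{L(x,u_x)}$, where $L(x,p)=\frac{p^2}{2}+V(x)+b(x)p$, so that $L_p(x,p)=p+b(x)$. Fix $u\in D(J)=W^{2,2}(\Tt)\cap L^2_0(\Tt)$ and a periodic test function $v\in W^{2,2}(\Tt)$. By the embedding $W^{2,2}(\Tt)\subset W^{1,\infty}(\Tt)$ recorded in Remark \ref{regmin}, $u_x\in L^\infty(\Tt)$, so the integrand and its derivative in $p$ are bounded; this justifies differentiating under the integral sign to obtain
\[
\frac{d}{d\epsilon}\Big|_{\epsilon=0}J[u+\epsilon v]=\int_\Tt e^{L(x,u_x)}\,\big(u_x+b(x)\big)\,v_x\,\dx.
\]

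Next I would impose stationarity and remove the derivative from $v$. The admissible directions are the $v$ with $\int_\Tt v\,\dx=0$, but as $v$ ranges over these, $v_x$ ranges over all mean-zero periodic functions; hence stationarity forces $\int_\Tt e^{L(x,u_x)}(u_x+b(x))\,w\,\dx=0$ for every mean-zero periodic $w$. By the fundamental lemma of the calculus of variations, $e^{L(x,u_x)}(u_x+b(x))$ is constant, and differentiating (which is legitimate by the regularity above) gives the Euler--Lagrange equation in strong form,
\[
\big(e^{L(x,u_x)}(u_x+b(x))\big)_x=0.
\]

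Finally I would close the equivalence. Defining $m:=e^{L(x,u_x)}=e^{\frac{u_x^2}{2}+V(x)+b(x)u_x}$ is exactly the first equation of \eqref{1dsmfg} with $\Hh=0$, and it automatically yields $m>0$; the strong Euler--Lagrange equation then reads $-(m(u_x+b))_x=0$, which is the second equation of \eqref{1dsmfg}. Conversely, any solution $(u,m,0)$ of \eqref{1dsmfg} satisfies $m=e^{L(x,u_x)}$ by its first equation, so its second equation is precisely the strong Euler--Lagrange equation, and $u$ is a critical point of $J$. I expect the main (though still mild) obstacle to be the analytic bookkeeping---justifying the differentiation under the integral and the passage to the strong form---which is handled by the $W^{1,\infty}$ bound on $u_x$; the computation of the first variation is otherwise the crux of the argument.
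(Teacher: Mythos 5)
Your proposal is correct and follows essentially the same route as the paper: compute the first variation of $J$, define $m$ by $\ln m=\frac{u_x^2}{2}+V(x)+b(x)u_x$, and identify stationarity with the weak form of $-(m(u_x+b))_x=0$. The only cosmetic difference is that you pass through the du Bois--Reymond step ($e^{L}(u_x+b)$ constant) before differentiating, whereas the paper integrates by parts directly against $v$; both yield the same conclusion.
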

\begin{proof} 
Let $u\in  W^{2,2}(\Tt)\cap L^2_0(\Tt)$. 
We say that $u$ is a critical point of $J$ if 
\[
\frac{\d}{\d\epsi} J[u + \epsi v]_{\big|_{\epsi=0}} = 0
\]
for all $v\in W^{2,2}(\Tt) \cap L^2_0(\Tt)$.
Fix any such $v$.
For all $\epsi\in\Rr$, we have that
\begin{eqnarray*}
\begin{aligned}
\frac{\d}{\d\epsi} J[u + \epsi v] = \int_\Tt e^{  \frac{u_x^2}{2} + \epsi u_x v_x + \epsi^2 \frac{v_x^2}{2} + V(x)
+b(x) u_x+\epsi b(x) v_x
 } (u_x v_x + b(x) v_x+\epsi v_x^2) \,\dx.
\end{aligned}
\end{eqnarray*}
Define $m$ by  
\begin{equation}
\label{defm}
\ln m= \frac{u_x^2}{2} + V(x)+b(x)u_x. 
\end{equation}
Then, it follows that
\begin{eqnarray*}
\begin{aligned}
\frac{\d}{\d\epsi} J[u + \epsi v]_{\big|_{\epsi=0}} = 0 \Leftrightarrow  \int_\Tt m(u_x+b(x)) v_x  \,\dx
= 0 \Leftrightarrow  -\int_\Tt (m(u_x+b(x)) )_x v \,\dx = 0.
\end{aligned}
\end{eqnarray*}
Since $v\in W^{2,2}(\Tt)$
is an arbitrary function with zero mean, we conclude
that $u$ is a critical point of $J$ if, and
only if, $(m,u)$ satisfies \eqref{1dsmfg}. 
\end{proof}

As mentioned in Remark~\ref{regmin}, the functional $J$ defined by \eqref{J} 
 admits a unique minimizer. Moreover, since $J$ is convex, any solution to the associated Euler-Lagrange
equation is a minimizer. 
By \eqref{defm}, we have $m> 0$. 
In MFG, it is usual to require 
\[
\int_{\Tt} m \,\dx=1. 
\]
To normalize $m$, we multiplying $m$ by a suitable constant and introduce the parameter $\Hh$, which
leads us to \eqref{1dsmfg}.

\subsection{Monotonicity conditions}

Let $H$ be a Hilbert space with the inner product $\langle\cdot, \cdot\rangle_H$. A map $A:D(A)\subset H\to H$ is a 
monotone operator if
\[
\langle A(w)-A(\tilde w), w-\tilde w\rangle_H \geq 0, 
\]
for all $w, \tilde w\in D(A)$. 

In the Hilbert space $L^2(\Tt)\times L^2(\Tt)$, we define
\begin{equation}
\label{opA}
A
\begin{bmatrix}
m\\
u
\end{bmatrix}
=
\begin{bmatrix}
-\frac{u_x^2}{2} - V(x) -b(x) u_x+\ln m\\
-(m( u_x+ b(x)))_x
\end{bmatrix}
,
\end{equation}
with \(D(A) = \{ (m,u) \in W^{1,2}(\Tt) \times
W^{2,2}(\Tt)\!: \, \inf_\Tt m >0\}\). Observe
that \(A\) maps \(D(A)\) into \(L^2(\Tt)\times L^2(\Tt)\) because
\(W^{1,2}(\Tt)\) is continuously embedded in \(L^\infty(\Tt)\).

\begin{lemma}
        The operator $A$ given by \eqref{opA} is a monotone operator in  $L^2(\Tt)\times L^2(\Tt)$. 
\end{lemma}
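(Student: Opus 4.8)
The plan is to verify the defining inequality directly, expanding the inner product on $L^2(\Tt)\times L^2(\Tt)$ and exhibiting the result as a sum of manifestly nonnegative terms. I would fix two elements $(m,u),(\tilde m,\tilde u)\in D(A)$ and split the pairing $\langle A(m,u)-A(\tilde m,\tilde u),\,(m-\tilde m,\,u-\tilde u)\rangle$ into a contribution $I_1$ from the first component of $A$, paired against $m-\tilde m$, and a contribution $I_2$ from the second component, paired against $u-\tilde u$. In $I_2$ I would integrate by parts; since all functions are periodic there are no boundary terms, and the derivative in $-(m(u_x+b))_x$ transfers onto $u-\tilde u$, giving $I_2=\int_\Tt\big(m(u_x+b)-\tilde m(\tilde u_x+b)\big)(u_x-\tilde u_x)\,\dx$.

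Within $I_1$ I would first isolate the logarithmic contribution $\int_\Tt(\ln m-\ln\tilde m)(m-\tilde m)\,\dx$. Because $t\mapsto\ln t$ is increasing, its integrand is nonnegative pointwise, so this piece is $\geq 0$ and can be set aside. To handle the remaining terms, which are all quadratic in the gradients, I would introduce $q=u_x+b$ and $\tilde q=\tilde u_x+b$, so that $q-\tilde q=u_x-\tilde u_x$; completing the square rewrites $-\tfrac{u_x^2}{2}-b u_x$ as $-\tfrac{q^2}{2}+\tfrac{b^2}{2}$, after which the $V$ and $b^2$ terms cancel between the two points. The leftover is $\int_\Tt\big[(-\tfrac{q^2}{2}+\tfrac{\tilde q^2}{2})(m-\tilde m)+(mq-\tilde m\tilde q)(q-\tilde q)\big]\,\dx$.

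I expect the main obstacle to be the algebraic bookkeeping of this last expansion: collecting the coefficients of $m$ and of $\tilde m$ separately, each should reduce to $\tfrac12(q-\tilde q)^2$, so that the whole integrand collapses to $\tfrac12(m+\tilde m)(u_x-\tilde u_x)^2$. Since $\inf_\Tt m>0$ and $\inf_\Tt\tilde m>0$ for points of $D(A)$, this is nonnegative, and adding back the logarithmic term yields $\langle A(m,u)-A(\tilde m,\tilde u),\,(m-\tilde m,\,u-\tilde u)\rangle\geq 0$, which is exactly monotonicity. The only nonalgebraic point is the legitimacy of the integration by parts, which follows from periodicity together with the embedding $W^{1,2}(\Tt)\hookrightarrow L^\infty(\Tt)$ noted after \eqref{opA}, guaranteeing that all the products appearing are integrable.
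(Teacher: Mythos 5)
Your proposal is correct and follows essentially the same route as the paper: integrate by parts in the second component (using periodicity) and reduce the pairing to $\int_\Tt(\ln m-\ln\tilde m)(m-\tilde m)\,\dx+\int_\Tt\tfrac{m+\tilde m}{2}(u_x-\tilde u_x)^2\,\dx$, each term being nonnegative. The paper states this identity without detail, whereas you spell out the completing-the-square bookkeeping; the content is the same.
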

\begin{proof}
        Let $(m,u)$, $(\theta, v)\in D(A)\subset  L^2(\Tt)\times L^2(\Tt)$. We have 
        {\setlength\arraycolsep{0.5pt}                \begin{eqnarray*}
                        \begin{aligned}
                                &\left\langle A \begin{bmatrix}m \\                                        u \\
 \end{bmatrix} - A \begin{bmatrix}\theta \\
                                v \\
                        \end{bmatrix}
                        , \begin{bmatrix}m \\
                                u \\
                        \end{bmatrix}
                        - \begin{bmatrix}\theta \\
                                v \\
                        \end{bmatrix} \right\rangle_{L^2(\Tt) \times L^2(\Tt)} \\
                        &\qquad= \int_\Tt (\ln m - \ln \theta)(m-\theta) \,\dx + \int_\Tt\Big( \frac{m}{2} + \frac{\theta}{2}\Big) (u_x - v_x)^2\,\dx,
                \end{aligned}
        \end{eqnarray*}}                where we used integration by parts. Because $\ln(\cdot)$ is an increasing function, and because $\theta, m>0$, the conclusion follows.
\end{proof}

As observed in \cite{ll1,ll3}, 
the monotonicity of  $A$ implies the uniqueness of the solutions.
Here, we use the monotonicity to construct a flow that approximates solutions of \eqref{1dsmfg}. 

\subsection{Weak solutions}
\label{wsd}

Denote by \(\langle \cdot, \cdot \rangle_{\Dd
\times
\Dd'}\) the duality pairing in the sense
of distributions.
 We say that a triplet $(m, u, \Hh)\in \Dd'\times \Dd' \times \RR$ is a weak solution of \eqref{1dsmfg} if 
\begin{equation*}
\left\langle A \begin{bmatrix}
\theta \\
v \\
\end{bmatrix}
-
\begin{bmatrix}\,\overline H\,\\
0 \\
\end{bmatrix}
, 
\begin{bmatrix}\theta \\
v \\
\end{bmatrix}
-
\begin{bmatrix}m \\
u \\
\end{bmatrix}
\right\rangle_{\Dd\times\Dd'}\geq 0
\end{equation*}
for all $(\theta, v)\in \Dd\times \Dd$ satisfying 
$\inf_\Tt
\theta>0$ and $\int_{\Tt} \theta\,\dx=1$. 
\subsection{Continuous gradient flow}
Next, we introduce the gradient flow
of the energy  \eqref{J}
with respect to the $L^2(\Tt)$-inner product. First, 
we extend $J$ in \eqref{J} to the whole space $L^2_0(\Tt)$ by setting $J[u]=+\infty$ if $u \in L^2_0(\Tt) \backslash W^{2,2}(\Tt)$. We will not relabel this extension. 

The functional $J: L^2_0(\Tt) \to [0,+\infty]$ is  proper, convex, and lower semicontinuous in $L^2_0(\Tt)$.
The subdifferential of $J$ is the map $\partial J: L^2_0(\Tt)\to 2^{L^2_0(\Tt)}$ defined for $u\in L^2_0(\Tt)$ by
{\setlength\arraycolsep{0.5pt}\begin{eqnarray*}
\begin{aligned}
\partial J[u]= \big\{ v\in L^2_0(\Tt)\!:\, J[w] \geq J[u] + \langle v, w-u \rangle_{L^2(\Tt)} \hbox{ for all } w\in L^2_0(\Tt)\big\}. 
\end{aligned}
\end{eqnarray*}}
The domain of $\partial J$, $D(\partial J)$, is the set of all
$u\in  L^2_0(\Tt)$ such that $\partial J[u] \not= \emptyset$.

The gradient flow with respect to the $L^2(\Tt)$-inner product and 
energy  $J$ is
{\setlength\arraycolsep{0.5pt}\begin{eqnarray}\label{gfabst}
\begin{aligned}
\dot{\boldsymbol{ u}}(t) \in -\partial J [\boldsymbol{u}(t)],\quad t\geq0,
\end{aligned}
\end{eqnarray}}where $\boldsymbol{u}: [0,+\infty) \to L^2_0(\Tt)$. As we will see next, \eqref{gfabst} is equivalent to
{\setlength\arraycolsep{0.5pt}\begin{eqnarray}\label{ourgf}
\begin{aligned}
\dot{\boldsymbol{ u}}(t)= \big(\boldsymbol{m}(t)((\boldsymbol{u}(t))_x+ b(x))\big)_x,\quad
t\geq0,
\end{aligned}
\end{eqnarray}}where $\boldsymbol{m}(t)$ is given by \eqref{defm} with $u$ replaced by $\boldsymbol{u}(t)$. Moreover, if the solution $\boldsymbol{u}$ to \eqref{ourgf} is regular enough, then 
\[
\frac{\d}{\dt}J[\boldsymbol{u}]=-\int_{\Tt} \Big[\big(\boldsymbol{m} (\boldsymbol{u}_x+b(x))\big)_x\Big]^2
\,\dx\leq 0. 
\]

\begin{proposition}\label{onsubdif}
We have $D(\partial J) =   W^{2,2}(\Tt)
 \cap   L^2_0(\Tt)$ and, for $u\in D(\partial J)$, $\partial J[u] = -\big (m (u_x+b(x))\big)_x$, where $m$ is given by \eqref{defm}.   
\end{proposition}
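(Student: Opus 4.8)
The plan is to prove the claimed identity $\partial J[u] = -\bigl(m(u_x+b(x))\bigr)_x$ together with the domain characterization $D(\partial J) = W^{2,2}(\Tt)\cap L^2_0(\Tt)$ by exploiting the convexity of $J$. Since $J$ is proper, convex, and lower semicontinuous on the Hilbert space $L^2_0(\Tt)$, its subdifferential is well defined, and for convex functionals the subdifferential of a Gateaux-differentiable functional reduces to the gradient. So the first step is to show that, for $u\in W^{2,2}(\Tt)\cap L^2_0(\Tt)$, the element $\xi := -\bigl(m(u_x+b(x))\bigr)_x$ (which lies in $L^2_0(\Tt)$ because $u\in W^{2,2}$ and $W^{2,2}(\Tt)\subset W^{1,\infty}(\Tt)$, so $m$ is bounded and $mu_x\in W^{1,2}$ with $\int_\Tt \xi\,\dx=0$ by periodicity) actually belongs to $\partial J[u]$.

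To establish $\xi\in\partial J[u]$, I would verify the subgradient inequality $J[w]\ge J[u]+\langle\xi,w-u\rangle_{L^2}$ for all $w\in L^2_0(\Tt)$. When $w\notin W^{2,2}(\Tt)$ the right-hand side involves $J[w]=+\infty$ by the extension convention, so the inequality is trivial; thus it suffices to treat $w\in W^{2,2}(\Tt)\cap L^2_0(\Tt)$. For such $w$, the integrand of $J$ is $e^{g}$ with $g=\tfrac{u_x^2}{2}+V+bu_x$, and convexity of the map $p\mapsto e^{\frac{p^2}{2}+V+bp}$ in the gradient variable $p$ is the crux: a pointwise convexity (first-order Taylor) inequality gives $e^{\frac{w_x^2}{2}+V+bw_x}\ge e^{\frac{u_x^2}{2}+V+bu_x}\bigl(1+(u_x+b)(w_x-u_x)\bigr)$. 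Integrating over $\Tt$ and recognizing $m=e^g$ from \eqref{defm}, then integrating by parts the term $\int_\Tt m(u_x+b)(w_x-u_x)\,\dx = -\int_\Tt\bigl(m(u_x+b)\bigr)_x(w-u)\,\dx = \langle\xi,w-u\rangle_{L^2}$, yields exactly the subgradient inequality. This simultaneously shows $W^{2,2}(\Tt)\cap L^2_0(\Tt)\subset D(\partial J)$ and pins down the value $\xi$ as \emph{the} subgradient, since for a Gateaux-differentiable convex functional the subdifferential is the singleton consisting of the Gateaux derivative, which Lemma~\ref{ELcont}'s computation identifies with $\xi$.

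For the reverse inclusion $D(\partial J)\subset W^{2,2}(\Tt)\cap L^2_0(\Tt)$, I would argue that if $u\in L^2_0(\Tt)$ has $\partial J[u]\ne\emptyset$, then in particular $J[u]<+\infty$, which by the extension convention forces $u\in W^{2,2}(\Tt)$. Indeed, taking any $v\in\partial J[u]$ and choosing a test direction $w=u+\phi$ in the subgradient inequality shows $J[u]$ is finite and dominated, so $u$ cannot lie in $L^2_0(\Tt)\setminus W^{2,2}(\Tt)$ where $J\equiv+\infty$.

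The main obstacle I anticipate is the careful justification of the pointwise convexity inequality and its integrated consequences at the regularity afforded by $W^{2,2}(\Tt)$ rather than smooth functions: one must confirm that $g\in W^{1,2}(\Tt)\cap L^\infty$ so that $m=e^g$ is bounded away from $0$ and $\infty$, that $m(u_x+b)\in W^{1,2}(\Tt)$ so the integration by parts is licit with no boundary terms (periodicity closes the torus), and that all integrals converge. The convexity of $p\mapsto e^{\frac{p^2}{2}+\alpha+\beta p}$ should be checked by a direct second-derivative computation, which is nonnegative. Once these technical points are secured, the identification of the subdifferential is immediate from the general theory of convex functionals on Hilbert spaces.
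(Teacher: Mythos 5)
Your overall route is the same as the paper's: you establish $-\bigl(m(u_x+b(x))\bigr)_x\in\partial J[u]$ for $u\in W^{2,2}(\Tt)\cap L^2_0(\Tt)$ via a first-order convexity inequality for the integrand followed by integration by parts (the paper splits this into convexity of the exponential plus the elementary inequality $\tfrac{w_x^2}{2}-\tfrac{u_x^2}{2}\geq u_xw_x-u_x^2$, which is the same first-order bound), and you obtain $D(\partial J)\subset W^{2,2}(\Tt)\cap L^2_0(\Tt)$ from $D(\partial J)\subset D(J)$ and the extension convention, exactly as the paper does (it cites Evans for this inclusion; your direct argument is fine once you test the subgradient inequality against a point where $J$ is finite, e.g.\ $w=0$, rather than against $w=u+\phi$).

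The one genuine soft spot is the final step, where you conclude that $\partial J[u]$ is a singleton because ``for a Gateaux-differentiable convex functional the subdifferential is the singleton consisting of the Gateaux derivative.'' That theorem does not apply here: $J$ is \emph{not} Gateaux differentiable at $u$ as a functional on the ambient space $L^2_0(\Tt)$, since in any direction $h\in L^2_0(\Tt)\setminus W^{2,2}(\Tt)$ the difference quotient is $+\infty$ (by the very extension convention you invoke elsewhere). The directional derivative exists and is linear only along the dense subspace $W^{2,2}(\Tt)\cap L^2_0(\Tt)$, so you must argue directly: for $\bar v\in\partial J[u]$ and $w\in W^{2,2}(\Tt)\cap L^2_0(\Tt)$, the two-sided difference quotients $\tfrac{J[u\pm\epsi w]-J[u]}{\epsi}\geq\pm\langle\bar v,w\rangle_{L^2(\Tt)}$ give, in the limit $\epsi\to0^+$ and using the computation of Lemma~\ref{ELcont}, both $\langle\xi,w\rangle_{L^2(\Tt)}\geq\langle\bar v,w\rangle_{L^2(\Tt)}$ and the reverse inequality, whence $\bar v=\xi$ by density of $W^{2,2}(\Tt)\cap L^2_0(\Tt)$ in $L^2_0(\Tt)$. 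This is precisely how the paper closes the argument; without it, your proof only shows that $\xi$ is \emph{an} element of $\partial J[u]$, not that it is the unique one, which is what the equality $\partial J[u]=-\bigl(m(u_x+b(x))\bigr)_x$ asserts.
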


\begin{proof} By \cite[Thm.~1 in \S9.6.1]{E6}, $D(\partial J) \subset D(J)=\{u\in L^2_0(\Tt)\!:\, J[u]<+\infty\}=   W^{2,2}(\Tt)
 \cap   L^2_0(\Tt)$. 

Conversely, fix $u\in W^{2,2}(\Tt)
 \cap   L^2_0(\Tt)$,  let  $m$ be given by \eqref{defm},
and set \(v= -\big(m (u_x+b(x))\big)_x\). Then,
\(v\in L^2_0(\Tt)\)
by the embedding $W^{2,2}(\Tt) \subset
W^{1,\infty}(\Tt)$ and by the periodicity  of \(u\), \(m\), and \(b\).
Moreover, using  the convexity of the exponential function, the integration by parts formula, and
the conditions $m>0$ and $\frac{w_x^2}{2}
- \frac{u_x^2}{2} \geq u_xw_x - u_x^2$, for each
$w\in W^{2,2}(\Tt)
 \cap   L^2_0(\Tt)$,  we obtain
{\setlength\arraycolsep{0.5pt}\begin{eqnarray*}
\begin{aligned}
J[w] &\geq J[u] + \int_\Tt m \Big( \frac{w_x^2}{2} + b(x) w_x -  \frac{u_x^2}{2}
- b(x) u_x \Big)\, \dx\\
& \geq J[u] + \int_\Tt m \big(u_x w_x - u_x^2  + b(x)(w_x - u_x)\big)\,\dx \\
 & = J[u] - \int_\Tt \big(m (u_x+b(x))\big)_x (w-u)\,\dx.
\end{aligned}
\end{eqnarray*}} Because $w\in W^{2,2}(\Tt)
 \cap   L^2_0(\Tt)$ is arbitrary, and because $J=+\infty$ in $L^2_0(\Tt) \backslash
W^{2,2}(\Tt)$, we obtain $v=-(m (u_x+b(x)))_x
\in \partial J[u]$. Because  $u\in W^{2,2}(\Tt)
 \cap   L^2_0(\Tt)$ is also arbitrary, we get $D(\partial J) \supset W^{2,2}(\Tt)
 \cap   L^2_0(\Tt)$.
 
 To conclude the proof, we show that for $u\in D(\partial
J)$, the function $-\big (m (u_x+b(x))\big)_x$ with $m$  given by \eqref{defm} is the unique element of  $ \partial J[u] $. Let $\bar v\in \partial J[u] $.  Then, for all $\epsi>0$ and $w\in W^{2,2}(\Tt)
 \cap   L^2_0(\Tt)$, we have 
\[
\frac{J[u \pm\epsi w] - J[u]}{\epsi} \geq \pm \langle\bar  v, w \rangle_{L^2(\Tt)}.
\]
Letting $\epsi\to 0^+$ and arguing as in the proof of Lemma~\ref{ELcont}, we obtain
\[
\big\langle-\big(m (u_x+b(x))\big)_x, w\big \rangle_{L^2(\Tt)}
\geq \pm
\langle\bar  v, w\rangle_{L^2(\Tt)}.
\]
Because $w\in W^{2,2}(\Tt)
 \cap   L^2_0(\Tt)$ is arbitrary, it follows that $\bar v= -\big(m (u_x+b(x))\big)_x$. 
\end{proof}

The following result about solutions to the gradient
flow \eqref{ourgf} holds by  \cite[Thm.~3 in \S9.6.2]{E6}
and by the fact that  $W^{2,2}(\Tt)
 \cap   L^2_0(\Tt)$ is dense in $L^2_0(\Tt)$.

\begin{theorem}For each $u\in W^{2,2}(\Tt)
 \cap   L^2_0(\Tt)$, there exists a unique function $\boldsymbol{u} \in C([0,+\infty);L^2_0(\Tt))$, with $\dot{\boldsymbol{ u}} \in L^\infty(0,+\infty;L^2_0(\Tt))$, such that
\begin{itemize}
\item[(i)] $\boldsymbol{u}(0) =u,$

\item [(ii)] $\boldsymbol{u}(t) \in W^{2,2}(\Tt)
 \cap   L^2_0(\Tt)$ for each $t>0$,
 
 \item [(iii)] $\dot{\boldsymbol{ u}}(t) = \big(\boldsymbol{m}(t)
 ((\boldsymbol{u}(t))_x+
b(x))\big)_x$ for a.e.\! $t\geq0$, where $\boldsymbol{m}(t)$ is given by \eqref{defm}
with $u$ replaced by $\boldsymbol{u}(t)$.
\end{itemize}
\end{theorem}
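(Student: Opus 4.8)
The plan is to deduce the statement directly from the abstract theory of gradient flows for proper, convex, lower semicontinuous functionals on a Hilbert space, as developed in \cite[\S9.6]{E6}, and then to translate the resulting abstract differential inclusion into the concrete evolution equation (iii) by means of Proposition~\ref{onsubdif}. In other words, the analytic content of the theorem has already been isolated in the computation of the subdifferential, and what remains is to check the hypotheses of the cited abstract result and to read off its conclusions in the present notation.

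First, I would record that the ambient space $L^2_0(\Tt)$ is a Hilbert space, as noted in Remark~\ref{regmin}, and that the extended functional $J\colon L^2_0(\Tt)\to[0,+\infty]$ is proper, convex, and lower semicontinuous, as already observed above. These are precisely the structural assumptions under which \cite[Thm.~3 in \S9.6.2]{E6} guarantees, for every initial datum in $\overline{D(\partial J)}$, the existence of a unique curve $\boldsymbol{u}\in C([0,+\infty);L^2_0(\Tt))$ solving the abstract gradient flow \eqref{gfabst} with $\boldsymbol{u}(0)=u$ and satisfying $\boldsymbol{u}(t)\in D(\partial J)$ for every $t>0$. When the initial datum belongs to $D(\partial J)$ itself, the same theorem yields the improved regularity $\dot{\boldsymbol u}\in L^\infty(0,+\infty;L^2_0(\Tt))$, together with the validity of the inclusion $\dot{\boldsymbol u}(t)\in-\partial J[\boldsymbol u(t)]$ for a.e.\ $t\geq0$. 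The density of $W^{2,2}(\Tt)\cap L^2_0(\Tt)$ in $L^2_0(\Tt)$ then gives $\overline{D(\partial J)}=L^2_0(\Tt)$, confirming that the abstract result indeed applies and, in particular, covers the initial data considered here.

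Next, I would invoke Proposition~\ref{onsubdif}, which identifies $D(\partial J)=W^{2,2}(\Tt)\cap L^2_0(\Tt)$ and shows that, for $u$ in this set, the subdifferential consists of the single element $\partial J[u]=-\big(m(u_x+b(x))\big)_x$, with $m$ given by \eqref{defm}. Taking the initial datum $u\in W^{2,2}(\Tt)\cap L^2_0(\Tt)=D(\partial J)$, assertions (i) and (ii) are immediate from the abstract theorem, while assertion (iii) follows by substituting the explicit form of $\partial J$ into the inclusion $\dot{\boldsymbol u}(t)\in-\partial J[\boldsymbol u(t)]$; since $\partial J$ is single valued on $D(\partial J)$, the inclusion collapses to the equality \eqref{ourgf}. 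Because Proposition~\ref{onsubdif} has already performed the genuine work of computing the subdifferential, I do not anticipate any real obstacle: the only point requiring care is the bookkeeping of function spaces—ensuring that the flow remains mean zero and that the $W^{2,2}$-regularity claimed for positive times matches membership in $D(\partial J)$—and both are supplied by Proposition~\ref{onsubdif} together with the Hilbert-space structure of $L^2_0(\Tt)$.
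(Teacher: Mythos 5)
Your proposal is correct and follows essentially the same route as the paper: the paper's proof consists precisely of invoking \cite[Thm.~3 in \S9.6.2]{E6} for the proper, convex, lower semicontinuous functional $J$ on the Hilbert space $L^2_0(\Tt)$, together with the density of $W^{2,2}(\Tt)\cap L^2_0(\Tt)$ in $L^2_0(\Tt)$ and the identification of $D(\partial J)$ and of the single-valued subdifferential from Proposition~\ref{onsubdif}. Your write-up merely makes explicit the verification of hypotheses and the translation of the differential inclusion into \eqref{ourgf}, which the paper leaves implicit.
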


\subsection{Monotonic flow}

Because the operator $A$ is monotone,
the flow
\begin{equation}
\label{dynapprox}
\begin{bmatrix}
\dot{\boldsymbol{ m}}\\
\dot{\boldsymbol{ u}}
\end{bmatrix}
=
-A
\begin{bmatrix}
\boldsymbol{ m}\\
\boldsymbol{ u}
\end{bmatrix}
\end{equation}
is a contraction in $L^2(\Tt) \times L^2(\Tt)$. That is, if $(\boldsymbol m,\boldsymbol u)$ and $(\tilde {\boldsymbol m}, \tilde {\boldsymbol u})$ solve \eqref{dynapprox}, then
\[
\frac \d {\dt} \left(\|\mbfu-\tilde \mbfu\|^2_{L^2(\Tt)}+\|\mbfm-\tilde \mbfm\|^2_{L^2(\Tt)}\right)
=-2
\left\langle
 A \begin{bmatrix} \mbfm \\
                   \mbfu \\
   \end{bmatrix} - 
A \begin{bmatrix}\tilde \mbfm \\
                                \tilde \mbfu \\
                        \end{bmatrix}
                        , \begin{bmatrix} \mbfm \\
                                \mbfu \\
                        \end{bmatrix}
                        - \begin{bmatrix}\tilde \mbfm \\
                                \tilde \mbfu \\
                        \end{bmatrix}
\right\rangle_{L^2(\Tt) \times L^2(\Tt)}\leq 0,
\]
provided that for each \(t \geq 0\), \((\mbfm(t),\mbfu(t))\),
 \((\tilde \mbfm(t), \tilde \mbfu(t)) \in D(A)\). The flow \eqref{dynapprox} has two undesirable features. First, it does not preserve probabilities; second, the flow may not preserve the condition $\mbfm>0$. To conserve probability, we modify \eqref{dynapprox} through
\begin{equation}
\label{dynapprox2}
\begin{bmatrix}
\dot \mbfm\\
\dot \mbfu
\end{bmatrix}
=
-A
\begin{bmatrix}
\boldsymbol m\\
\boldsymbol u
\end{bmatrix}
+
\begin{bmatrix}
\,\Hh(t)\\
0
\end{bmatrix}
,
\end{equation}
where $\Hh(t)$ is such that $\frac \d {\dt} \int_\Tt \boldsymbol m\,\dx=0$.
A straightforward computation shows that \eqref{dynapprox2} is still a contraction in $L^2(\Tt) \times L^2(\Tt)$. More precisely,
 if $(\boldsymbol
m,\boldsymbol u)$ and $(\tilde {\boldsymbol m},
\tilde {\boldsymbol u})$ solve \eqref{dynapprox2}
and satisfy \((\mbfm(t),\mbfu(t))\),
 \((\tilde \mbfm(t), \tilde \mbfu(t)) \in D(A)\)
  for all \(t\geq0\), \(\int_\Tt \boldsymbol m(0)\,\dx =1\),
and \(\int_\Tt \tilde{\boldsymbol m}(0)\,\dx =1\),
then
\[
\frac \d {\dt} \left(\|\mbfu-\tilde \mbfu\|^2_{L^2(\Tt)}+\|\mbfm-\tilde
\mbfm\|^2_{L^2(\Tt)}\right)
\leq 0.
\]
Furthermore, positivity holds for the discretization of  \eqref{dynapprox2}
that we develop in the next section. Therefore, the discrete analog of  \eqref{dynapprox2} is a contracting flow that preserves probability and positivity. Then, as $t\to \infty$, the solutions approximate \eqref{1dsmfg}.

\section{Discrete setting}\label{discrete}

Here, we discuss the numerical approximation of \eqref{1dsmfg}. We use a monotone scheme for the Hamilton-Jacobi
equation. For the Fokker-Planck equation, we consider
the adjoint of the linearization of the
discrete Hamilton-Jacobi equation. This technique preserves
both the gradient structure and the monotonicity properties of
the original problem.

\subsection{Discretization of the Hamilton-Jacobi operator}
We consider $N$ equidistributed points on $[0,1]$, 
$x_i=\frac{i}{N}$, $i\in\{1,...,N\}$,
and corresponding values of the approximation
to $u$
given by the vector
 $u=(u_1, ..., u_N)\in\Rr^N$.
We set $h=\frac{1}{N}$. 
To incorporate the periodic conditions,
 we use the periodicity convention $u_0=u_N$ and $u_{N+1}= u_1$.
For each $i\in \{1,...,N\}$, let $\psi_i: \RR^N\to\RR^2$ be given by
\begin{eqnarray*}
\begin{aligned}
\psi_i (u) = (\psi_i^1(u),\psi_i^2(u)) = \Big(\frac{u_i - u_{i+1}}{h}, \frac{u_i - u_{i-1}}{h}
\Big)
\end{aligned}
\end{eqnarray*}
for $u\in\RR^N$. To discretize the operator
\[
u\mapsto \frac{u_x^2}{2} + V(x)+b(x) u_x, 
\]
we use a monotone finite difference scheme, see \cite{BS}.
This scheme is built as follows.
 We consider a function $F:\Rr\times \Rr\times\Tt\to \Rr$ satisfying
the following four conditions.\begin{enumerate}
  \item $F(p,q,x)$ is jointly convex in $(p,q)$.
  \item The functions $p\mapsto F(p, q, x)$ for fixed $(q, x)$ and  $q\mapsto F(p, q, x)$ for fixed $(p, x)$
  are increasing. 
  \item $F(-p, p, x)=\frac{p^2}{2}+b(x)p+V(x)$.
  \item There exists a positive constant, \(c\),
  such that 
  \begin{equation}
  \label{newonF}
  F(-p, q, x) + F(q', p, x') \geq
  - \frac 1 c + c\,p^2.
  \end{equation}  
\end{enumerate}
An example of such a function  may be found
in Section~\ref{impsec} below.
Next, we set
\begin{equation}
\label{Fi}
F_i(p,q)=F(p,q, x_i). 
\end{equation}
Let $G:\RR^N\to\RR^N$ be the function defined
for $u\in\RR^N$
by 
\begin{equation}
\label{GFormula}
G(u) = (G_1(u), ... , G_N(u))= \big((F_1\circ
\psi_1)(u), ... , (F_N\circ
\psi_N)(u)\big).
\end{equation}
Then, $G(u)$ is a finite difference scheme for the Hamilton-Jacobi operator 
$\frac{u_x^2}{2}+V(x)+b(x)u_x$. 

\begin{remark}
In the higher-dimensional case, the Hamilton-Jacobi operator can be discretized 
with a similar monotone scheme. See \cite{AO} for a systematic study of convergent monotone difference
schemes for elliptic and parabolic equations. 
\end{remark}

\subsection{The variational formulation}

Here, we study the following discrete version,
\(\phi:\RR^N \to \RR\), of the functional \eqref{J}:
\begin{equation}
\label{JD}
\phi(u)= \sum_{i=1}^{N} h\, e^{G_i(u)},\quad u\in\RR^N,
\end{equation}
where $G_i$ is given by \eqref{GFormula}.

\begin{lemma}\label{phicx}
The function $\phi$ given by \eqref{JD} 
is convex.
\end{lemma}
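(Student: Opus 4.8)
The plan is to establish convexity of $\phi$ term by term, exploiting that a nonnegative linear combination of convex functions is convex. Since $h>0$, it suffices to prove that each map $u \mapsto e^{G_i(u)}$ is convex on $\RR^N$, and for this the natural route is to first show that $G_i$ itself is convex and then compose with the exponential.

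First I would observe that, for each $i$, the map $\psi_i \colon \RR^N \to \RR^2$ is affine (in fact linear), since both components $\psi_i^1(u) = (u_i - u_{i+1})/h$ and $\psi_i^2(u) = (u_i - u_{i-1})/h$ are linear in $u$ (with the periodicity convention for the indices). By hypothesis (1), $F_i = F(\cdot,\cdot,x_i)$ is jointly convex in its first two arguments. Since the precomposition of a convex function with an affine map is convex, it follows that $G_i = F_i \circ \psi_i$ is convex on $\RR^N$. Note that only the joint convexity assumption (1) is needed at this stage; the monotonicity in (2) and the structural conditions (3)--(4) play no role here.

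Next I would use the elementary fact that if $g \colon \RR^N \to \RR$ is convex and $\Phi \colon \RR \to \RR$ is convex and nondecreasing, then $\Phi \circ g$ is convex. Taking $\Phi = \exp$, which is convex and increasing, and $g = G_i$, which is convex by the previous step, yields that $u \mapsto e^{G_i(u)}$ is convex. Concretely, for $u,w \in \RR^N$ and $\lambda \in [0,1]$, convexity of $G_i$ gives $G_i(\lambda u + (1-\lambda)w) \leq \lambda G_i(u) + (1-\lambda)G_i(w)$; applying the monotonicity and then the convexity of $\exp$ to the right-hand side delivers the desired inequality $e^{G_i(\lambda u + (1-\lambda)w)} \leq \lambda e^{G_i(u)} + (1-\lambda) e^{G_i(w)}$. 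Summing these inequalities against the positive weights $h$ then shows that $\phi$ is convex.

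I do not expect a genuine obstacle here: the argument is a chain of standard convexity-preserving operations (affine precomposition, composition with a convex increasing outer function, and nonnegative summation). The only point requiring care is the correct bookkeeping --- verifying that $\psi_i$ is affine and invoking the composition rule for $\exp$ with the appropriate monotonicity --- rather than any substantive difficulty. It is worth recording explicitly that the convexity of $\phi$ relies solely on hypothesis (1), so the conclusion is robust to the particular monotone scheme chosen, as long as it is built from a jointly convex $F$.
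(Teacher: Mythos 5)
Your proposal is correct and follows essentially the same route as the paper: the paper's proof likewise uses that the difference quotients are affine in $u$, applies the joint convexity of each $F_i$, and then composes with the increasing convex exponential before summing against the positive weights $h$. The only difference is presentational --- you phrase it as a chain of convexity-preserving operations, while the paper writes it as a single displayed inequality chain.
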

 \begin{proof}
Fix $\lambda\in (0,1)$ and $u$, $v\in \Rr^N$.
Because each $F_i$ is convex,
because the exponential is an increasing
convex function, and because $h>0$, we have
\begin{eqnarray*}
\begin{aligned}
\phi(\lambda u + (1-\lambda)v) 
&= \sum_{i=1}^{N} h\, e^{F_i
\big( \lambda \frac{u_i - u_{i+1}}{h} + (1-\lambda)
\frac{v_i - v_{i+1}}{h}, \lambda \frac{u_i - u_{i-1}}{h}  + (1-\lambda) \frac{v_i
- v_{i-1}}{h}
\big) } \\
& \leq \sum_{i=1}^{N} h \, e^{\lambda
F_i
\big(\frac{u_i - u_{i+1}}{h}, \frac{u_i - u_{i-1}}{h}
\big) + (1-\lambda) F_i
\big(\frac{v_i - v_{i+1}}{h}, \frac{v_i - v_{i-1}}{h}
\big)} \\
& \leq \lambda \phi(u) + (1-\lambda) \phi(v),
\end{aligned}
\end{eqnarray*}
which completes the proof.\end{proof}

\begin{lemma}
        \label{L33}
Let $\phi$ be given by \eqref{JD}. 
Let $\Ll^*_u:\RR^N\to\RR^N$
represent the adjoint operator of the linearized
operator
$\Ll_u :\RR^N\to\RR^N$ of the function $G$
at $u\in\RR^N$.
A vector $u\in \RR^N$ is a critical point of $\phi$ if and only if there exists $\tilde m\in\RR_+^N$ such that the pair  $(\tilde m,u)$ satisfies 
\begin{equation*}
\begin{cases}
\displaystyle G_i(u)= \ln \tilde m_i, \\
\displaystyle (\Ll_u^* \tilde m)_i=0
\end{cases}
\end{equation*}
for all $i\in \{1,...,N\}$.
\end{lemma}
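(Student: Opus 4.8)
The plan is to compute the gradient of $\phi$ directly via the chain rule and to observe that it factors through the transpose of the Jacobian of $G$, which is precisely the adjoint $\Ll_u^*$ of the linearization $\Ll_u$. First I would record the linear-algebra dictionary: since $G:\RR^N\to\RR^N$ is differentiable, its linearization at $u$ is the linear map $\Ll_u$ represented by the Jacobian $DG(u)$ with entries $(DG(u))_{ij}=\frac{\partial G_i}{\partial u_j}(u)$; with respect to the Euclidean inner product on $\RR^N$, the adjoint $\Ll_u^*$ is then represented by the transpose $(DG(u))^{\mathsf T}$, so that
\[
(\Ll_u^* w)_j=\sum_{i=1}^N (DG(u))_{ij}\,w_i\qquad\text{for all } w\in\RR^N.
\]

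Next I would differentiate \eqref{JD}. For each $j\in\{1,\dots,N\}$,
\[
\frac{\partial \phi}{\partial u_j}(u)=\sum_{i=1}^N h\,e^{G_i(u)}\,\frac{\partial G_i}{\partial u_j}(u).
\]
The crucial bookkeeping is the introduction of the vector $\tilde m\in\RR^N$ defined by $\tilde m_i=e^{G_i(u)}$. This choice is automatically positive, so $\tilde m\in\RR_+^N$; it encodes the first equation of the system since $G_i(u)=\ln\tilde m_i$ by construction; and it rewrites the previous display as
\[
\frac{\partial \phi}{\partial u_j}(u)=h\sum_{i=1}^N (DG(u))_{ij}\,\tilde m_i=h\,(\Ll_u^*\tilde m)_j.
\]

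Because $h>0$, this single identity yields both implications. If $u$ is a critical point of $\phi$, then taking $\tilde m_i=e^{G_i(u)}$ gives $G_i(u)=\ln\tilde m_i$ and, from the vanishing of $\nabla\phi(u)$, also $(\Ll_u^*\tilde m)_j=0$ for every $j$; conversely, if some $\tilde m\in\RR_+^N$ satisfies $G_i(u)=\ln\tilde m_i$ and $(\Ll_u^*\tilde m)_i=0$ for all $i$, then necessarily $\tilde m_i=e^{G_i(u)}$, whence $\frac{\partial \phi}{\partial u_j}(u)=h\,(\Ll_u^*\tilde m)_j=0$ and $u$ is critical. There is no deep obstacle here: the statement is essentially the chain rule together with the identification of the adjoint with the transposed Jacobian. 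The only points requiring care are the correct index handling in $(\Ll_u^*\tilde m)_j=\sum_i(DG(u))_{ij}\tilde m_i$ and the recognition that the substitution $\tilde m_i=e^{G_i(u)}$ simultaneously captures the first equation and makes $\nabla\phi$ proportional to $\Ll_u^*\tilde m$; positivity of $\tilde m$ then comes for free from the exponential.
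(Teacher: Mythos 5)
Your proposal is correct and matches the paper's intent: the paper's proof of this lemma is just the remark that it is "similar to the one for Lemma~\ref{ELcont}", and your chain-rule computation of $\nabla\phi(u)=h\,\Ll_u^*\tilde m$ with $\tilde m_i=e^{G_i(u)}$ is precisely the discrete analog of that argument, with the transposed Jacobian playing the role of the integration by parts. The index conventions also agree with the paper's explicit formula $(\Ll_u^*w)_i=\sum_j\partial_iG_j(u)\,w_j$.
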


\begin{proof}
The proof is similar to the one for Lemma~\ref{ELcont}.
\end{proof}

\begin{remark}We observe that for $w\in \RR^N$ and $i\in\{1,...,N\}$, we have
\begin{eqnarray*}
\begin{aligned}
(\Ll_u^* w)_i &= \sum_{j=1}^N \partial_iG_j(u)w_j
 = \sum_{j=i-1}^{i+1} \frac{\partial}{\partial
u_i} F_j(\psi_j(u))w_j \\
& = \frac{1}{h} \bigg[ - \frac{\partial  F_{i-1}}{\partial p}
(\psi_{i-1}(u))w_{i-1} + \frac{\partial F_{i}}{\partial
p} (\psi_{i}(u))w_{i} + \frac{\partial F_{i}}{\partial
q} (\psi_{i}(u))w_{i} - \frac{\partial F_{i+1}}{\partial
q} (\psi_{i+1}(u))w_{i+1}\bigg].
\end{aligned}
\end{eqnarray*}
Simple computations show that $\Ll_u^* w$ is a consistent finite difference scheme for the Fokker-Planck equation. 
\end{remark}

\subsection{The discretized operator}

Motivated by the previous discussion,
we discretize \eqref{1dsmfg} through the finite difference operator
        \begin{equation}
        \label{A}
                \begin{aligned}
                        A^N \begin{bmatrix}m \\
                                u \\
                        \end{bmatrix}
                        = \begin{bmatrix}\displaystyle -G(u) + \ln m  \\
                                \Ll_u^* m
                        \end{bmatrix}, \quad (m,u)\in
\RR^N_+ \times \RR^N,                 \end{aligned}
        \end{equation}
where $\ln m = (\ln m_1, ... , \ln m_N)$ and
where $G$ is given by \eqref{GFormula}. 
Accordingly, the analog to \eqref{1dsmfg} becomes 
\begin{equation}
\label{A=0}
A^N\begin{bmatrix}m^N \\
u^N \\
\end{bmatrix}
=
\begin{bmatrix}-\Hh^N \boldsymbol{\iota} \\
0 \\
\end{bmatrix},
\end{equation}
where we highlighted the dependence on \(N\)
and where \(\boldsymbol{\iota}=(1, ..., 1)\in
\RR^N\).
In \eqref{A=0}, the unknowns are the vector $u^N$, 
the discrete probability density $m^N$, normalized to
 $h \sum_{i=1}^N m_i^N=1$, and the effective Hamiltonian $\Hh^N$. 

We are interested in two main points. The first
is the existence and approximation of solutions to
\eqref{A=0}. 
The second is the convergence of these solutions to solutions of \eqref{1dsmfg}.
The first issue will be examined by gradient-flow techniques and by monotonicity methods. The second issue 
is a consequence of a modified Minty method. 

\subsection{Existence of solutions}

Here, we prove the existence of solutions to \eqref{A=0}. Our proof uses ideas similar to those of the direct method of the calculus of variations.
\begin{proposition}
        \label{eos}
Let $\phi$ be as in \eqref{JD}. Then, there exists $u^N\in \Rr^N$ with $\sum_{i=1}^N u_i^N=0$ that minimizes $\phi$. 
Moreover, 
\begin{equation}
\label{l2b}
h\sum_{i=1}^{N} (u_i^N)^2\leq C
\end{equation}
for some positive constant \(C\) independent of $h$. In addition, there exist \(m^N\in \RR^N\)
with \(h\sum_{i=1}^N m_i^N = 1\) and \(\overline
H^N \in\RR^N\) such that the triplet \((u^N, m^N,
\overline H^N)\) satisfies \eqref{A=0}.
  \end{proposition}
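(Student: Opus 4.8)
The plan is to produce $u^N$ by the direct method of the calculus of variations applied to $\phi$ on the mean-zero subspace $\{u\in\RR^N:\sum_i u_i=0\}$, then to obtain the uniform bound \eqref{l2b} by comparing the minimal value of $\phi$ with $\phi(0)$, and finally to recover $m^N$ and $\overline{H}^N$ from the Euler--Lagrange characterization in Lemma~\ref{L33}.

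First, $\phi$ is continuous and, by Lemma~\ref{phicx}, convex; it is also invariant under $u\mapsto u+c\,\boldsymbol{\iota}$, since each $\psi_i$ depends only on differences of the $u_j$. The heart of the argument is a coercivity estimate. Writing $p_i=\frac{u_{i+1}-u_i}{h}$, one has $G_i(u)=F(-p_i,p_{i-1},x_i)$, so pairing consecutive terms and invoking the growth condition \eqref{newonF} (with $p=p_i$) gives
\[
G_i(u)+G_{i+1}(u)=F(-p_i,p_{i-1},x_i)+F(-p_{i+1},p_i,x_{i+1})\geq -\tfrac1c+c\,p_i^2 .
\]
Summing over $i$ with the periodic convention, and then applying Jensen's inequality to the convex exponential together with $Nh=1$, I would obtain a bound of the form
\[
\phi(u)\geq \exp\Big(-\tfrac{1}{2c}+\tfrac{c}{2}\,h\textstyle\sum_i p_i^2\Big).
\]
Hence $\phi(u)\to+\infty$ as the discrete seminorm $h\sum_i p_i^2$ blows up; combined with the discrete Poincar\'e inequality on the mean-zero subspace, this yields coercivity there. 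Since $\RR^N$ is finite-dimensional, any minimizing sequence is bounded and continuity of $\phi$ produces a minimizer $u^N$ with $\sum_i u_i^N=0$.

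For the bound \eqref{l2b}, I would test optimality against $u=0$. By the consistency condition $F(-p,p,x)=\frac{p^2}{2}+b(x)p+V(x)$ one has $F(0,0,x_i)=V(x_i)$, so $\phi(0)=h\sum_i e^{V(x_i)}$ is a Riemann sum for $\int_\Tt e^{V}\,\dx$ and is therefore bounded independently of $N$. Feeding $\phi(u^N)\leq\phi(0)\leq C_0$ into the coercivity estimate bounds $h\sum_i p_i^2$ independently of $N$, and the discrete Poincar\'e inequality---whose constant is uniform in $N$ (it is controlled by the spectral gap of the periodic discrete Laplacian)---then converts this into $h\sum_i(u_i^N)^2\leq C$. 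The essential point is that both $c$ in \eqref{newonF} and the Poincar\'e constant are $N$-independent.

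Finally, to build $m^N$ and $\overline{H}^N$: because $\phi$ is translation invariant along $\boldsymbol{\iota}$, the constrained minimizer $u^N$ is in fact a free critical point of $\phi$ (the Lagrange multiplier vanishes since $\boldsymbol{\iota}\cdot\nabla\phi(u^N)=0$). Lemma~\ref{L33} then provides $\tilde m\in\RR_+^N$ with $G_i(u^N)=\ln\tilde m_i$ and $\Ll_{u^N}^*\tilde m=0$. Setting $m^N=\tilde m/(h\sum_j\tilde m_j)$ normalizes $h\sum_i m_i^N=1$ and, by linearity of $\Ll_{u^N}^*$, preserves $\Ll_{u^N}^*m^N=0$; taking $\overline{H}^N=\ln\!\big(h\sum_j\tilde m_j\big)$ yields $\ln m_i^N=G_i(u^N)-\overline{H}^N$, i.e.\ $-G(u^N)+\ln m^N=-\overline{H}^N\boldsymbol{\iota}$, so $(u^N,m^N,\overline{H}^N)$ solves \eqref{A=0}. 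I expect the main obstacle to be the coercivity estimate---specifically, extracting uniform-in-$N$ control of $h\sum_i p_i^2$ from \eqref{newonF} through the consecutive-term pairing and Jensen's inequality---whereas the passage from the constrained minimizer to a solution of \eqref{A=0} is routine given Lemma~\ref{L33}.
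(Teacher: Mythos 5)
Your proposal is correct and follows essentially the same route as the paper: the direct method on the mean-zero subspace, coercivity extracted from condition \eqref{newonF} via Jensen's inequality applied to the exponential, and Lemma~\ref{L33} plus normalization of $\tilde m_i=e^{G_i(u^N)}$ to produce $(m^N,\overline H^N)$. The only differences are minor: the paper converts the bound on $\sum_i |u_{i+1}^N-u_i^N|^2/h$ into an $\ell^\infty$ bound by a telescoping (one-dimensional Morrey-type) argument rather than your spectral-gap discrete Poincar\'e inequality, and you make explicit the translation-invariance argument showing that the constrained minimizer is a free critical point, a point the paper leaves implicit.
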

\begin{proof}
To simplify the notation, we will drop the explicit
dependence on \(N\) of \(u^N\) and \(m^N\). Accordingly,
we simply write \(u\) and \(m\).

 As in the direct method of the calculus of variations, we select a minimizing sequence, $(u^k)_{k\in\NN}
\subset \Rr^N$, for $\phi$ satisfying
\begin{equation}
\label{s=0}
\sum_{i=1}^N u_i^k=0.
\end{equation}
Then, there exists a positive constant, $C$, independent of $k$ and \(h\) such that $\sup_{k\in\NN}\phi(u^k)\leq C$. Using Jensen's inequality, for all \(k\in\NN\),
we have
that\[
h\sum_{i=1}^{N} G_i(u^k) \leq \tilde C,
\]  
where $\tilde C$ is  positive constant that
is independent of $k$ and \(h\). This  estimate together
with \eqref{newonF}--\eqref{GFormula}  implies
that\[
\sum_{i=1}^{N} \frac{|u_{i+1}^k-u_i^k|^2}{h}\leq \bar C
\]
for some positive constant $\bar C$ that is
independent
of $k$ and \(h\).
By a telescoping series argument combined with
the Cauchy inequality, for all \(l,m\in \{1,...,N\}\),
we have
\begin{align*}
|u_l^k-u_m^k|&\leq \sum_{i=1}^{N} |u_{i-1}^k-u_i^k|\leq
\Big(\frac{1}{h}\Big)^{\frac 1 2} \left(\sum_{i=1}^{N} |u_{i-1}^k-u_i^k|^2\right)^{\frac 1 2}\leq
\bar C^{\frac 1 2}. 
\end{align*}
The previous bound combined with \eqref{s=0} yields
\begin{equation*}
\max_{1 \leq i \leq N} |u_i^k| \leq \bar C^{\frac 1 2}.
\end{equation*}
 By compactness and by extracting a subsequence if necessary, there exists
$u\in \Rr^N$ with $\sum_{i=1}^N u_i=0$ such that $u^k\to u$ in \(\RR^N\). The continuity of $\phi$ implies that $u$ is a minimizer of \(\phi\). Furthermore, \eqref{l2b} holds.

Finally, by Lemma~\ref{L33}, we have 
\[
A^N\begin{bmatrix}
\tilde m \\
u \\
\end{bmatrix}
=
\begin{bmatrix} 0\\
0 \\
\end{bmatrix}
\]
for $\tilde m_i=e^{G_i(u)}$. By selecting $\Hh^N$ conveniently 
and by setting $m_i=e^{-\Hh^N}\tilde m_i$, we obtain $h\sum_{i=1}^{N} m_i=1$. Moreover, the
triplet \((u,m, \overline H^N) \) satisfies \eqref{A=0}.
\end{proof}

\subsection{Monotonicity properties}

Next, we prove that the operator $A^N$ is monotone.

\begin{lemma}        The operator $A^N$ given by \eqref{A} is monotone in \(\RR^N \times \RR^N\). More precisely, for all $(m,u)$, $(\theta,v) \in  \RR_+^N \times \RR^N$,
                \begin{eqnarray}\label{ANmon}
                \begin{aligned}
                        \bigg\langle A^N \begin{bmatrix}m \\
                                u \\
                        \end{bmatrix} - A^N \begin{bmatrix}\theta \\
                        v \\
                \end{bmatrix}
                , \begin{bmatrix}m \\
                        u \\
                \end{bmatrix}
                - \begin{bmatrix}\theta \\
                        v \\
                \end{bmatrix} \bigg\rangle_{\RR^N \times \RR^N} \geq 0.
        \end{aligned}
\end{eqnarray}
\end{lemma}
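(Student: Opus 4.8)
The plan is to expand the pairing in \eqref{ANmon} into the contributions coming from the two rows of $A^N$, and then to reorganize the result so that the monotonicity of $\ln$ and the convexity of the components $G_i$ account for all the nonnegativity. Writing the inner product on $\RR^N\times\RR^N$ as the sum of the two $\RR^N$-pairings, the first row of $A^N$ contributes
\[
\langle \ln m-\ln\theta,\,m-\theta\rangle-\langle G(u)-G(v),\,m-\theta\rangle,
\]
while the second row contributes $\langle \Ll_u^* m-\Ll_v^*\theta,\,u-v\rangle$. The first term equals a positive multiple of $\sum_i(\ln m_i-\ln\theta_i)(m_i-\theta_i)$ and is nonnegative because $\ln$ is increasing and $m,\theta\in\RR_+^N$; this is the discrete analogue of the $\int_\Tt(\ln m-\ln\theta)(m-\theta)\,\dx$ term appearing in the continuous monotonicity lemma. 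It remains to handle the two cross terms.

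The key step is to transfer each adjoint back onto $u-v$. By the definition of $\Ll_u^*$ as the adjoint of the linearization $\Ll_u=DG(u)$, we have $\langle \Ll_u^* m,\,u-v\rangle=\langle m,\,\Ll_u(u-v)\rangle$ and $\langle \Ll_v^*\theta,\,u-v\rangle=\langle\theta,\,\Ll_v(u-v)\rangle$, where $(\Ll_u w)_i=\langle\nabla G_i(u),w\rangle$. Regrouping the remaining terms according to the factors $m_i$ and $\theta_i$ produces
\[
\sum_i m_i\big(G_i(v)-G_i(u)+\langle\nabla G_i(u),u-v\rangle\big)+\sum_i\theta_i\big(G_i(u)-G_i(v)-\langle\nabla G_i(v),u-v\rangle\big).
\]
Each $G_i=F_i\circ\psi_i$ is convex, since $F_i$ is convex by the first assumption on $F$ and $\psi_i$ is linear; this is exactly the convexity already used in Lemma~\ref{phicx}. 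Hence the tangent inequality $G_i(v)\geq G_i(u)+\langle\nabla G_i(u),v-u\rangle$, together with its counterpart obtained by interchanging $u$ and $v$, shows that each parenthesized quantity is nonnegative. Since $m_i>0$ and $\theta_i>0$, both sums are nonnegative, and adding back the $\ln$ term yields \eqref{ANmon}.

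The computation is elementary once the bookkeeping is arranged, so the only genuine obstacle is the middle step: one must recognize that transferring $\Ll_u^*$ onto $u-v$ converts the coupling between the two rows of $A^N$ into precisely the subgradient inequalities for the convex functions $G_i$, weighted by the positive densities $m_i$ and $\theta_i$. This is the discrete mirror of the integration by parts that yields the $\big(\tfrac m2+\tfrac\theta2\big)(u_x-v_x)^2$ term in the continuous proof: there the convexity of $p\mapsto\tfrac{p^2}2+b(x)p$ is responsible for nonnegativity, and here the convexity of $G_i$ plays the same role.
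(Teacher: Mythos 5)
Your proof is correct and follows essentially the same route as the paper: split off the $(\ln m_i-\ln\theta_i)(m_i-\theta_i)$ term, move $\Ll_u^*$ and $\Ll_v^*$ onto $u-v$ via the adjoint relation, and conclude from the convexity of each $G_i=F_i\circ\psi_i$ weighted by the positive entries $m_i$, $\theta_i$. The only difference is cosmetic: the paper verifies the adjoint identity $\langle \Ll_u^* m,\,u-v\rangle=\sum_i m_i\,\nabla F_i(\psi_i(u))\cdot(\psi_i(u)-\psi_i(v))$ by an explicit index-shifting computation using the periodicity convention, whereas you invoke it directly from the definition of $\Ll_u^*$ as the adjoint of the linearization.
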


\begin{proof}
        Fix $(m,u)$,
        $(\theta,v) \in  \RR_+^N \times \RR^N$. Using the definition of \(A^N\) and the fact that
 $\ln(\cdot)$ is increasing, we obtain
                         \begin{eqnarray*}
                \begin{aligned}
                        & \bigg\langle A^N \begin{bmatrix}m \\
                                u \\
                        \end{bmatrix} - A^N \begin{bmatrix}\theta \\
                        v \\
                \end{bmatrix}
                , \begin{bmatrix}m \\
                        u \\
                \end{bmatrix}
                - \begin{bmatrix}\theta \\
                        v \\
                \end{bmatrix} \bigg\rangle_{\RR^N \times \RR^N}\\
                                & \qquad= \sum_{i=1}^{N} \Big[\big(G_i(v) - G_i(u) + \ln m_i - \ln \theta_i\big) (m_i - \theta_i)  + \big((\Ll_u^* m)_i - (\Ll_v^*
                \theta)_i\big) (u_i - v_i) \Big]\\
                                &\qquad \geq \sum_{i=1}^{N} \big[  \big(G_i(v)-G_i(u)\big)  m_i   + (\Ll_u^* m)_i  (u_i - v_i)\big] \\
                                &\qquad\qquad + \sum_{i=1}^{N} \big[\big(G_i(u) - G_i(v)\big) \theta_i   - (\Ll_v^* \theta)_i  (u_i - v_i)\big].
        \end{aligned}
\end{eqnarray*}
 Moreover, by the periodicity convention, we have
that\begin{eqnarray*}
        \begin{aligned}
                \sum_{i=1}^{N}(\Ll_u^* m)_i  (u_i - v_i) &= \frac{1}{h} \sum_{i=1}^{N} \bigg[ - \frac{\partial  F_{i-1}}{\partial
                        p}
                (\psi_{i-1}(u))m_{i-1} (u_i - v_i)+ \frac{\partial F_{i}}{\partial
                        p} (\psi_{i}(u))m_{i} (u_i - v_i) \bigg] \\
                                &\qquad +\frac{1}{h} \sum_{i=1}^{N} \bigg[  \frac{\partial F_{i}}{\partial
                        q} (\psi_{i}(u))m_{i} (u_i - v_i)- \frac{\partial F_{i+1}}{\partial
                        q} (\psi_{i+1}(u))m_{i+1} (u_i - v_i)\bigg]\\
                                &=
                \frac{1}{h} \sum_{i=1}^{N} \bigg[ - \frac{\partial  F_{i}}{\partial
                        p}
                (\psi_{i}(u))m_{i} (u_{i+1} - v_{i+1})+ \frac{\partial
                        F_{i}}{\partial
                        p} (\psi_{i}(u))m_{i} (u_i - v_i) \bigg] \\
                                &\qquad +\frac{1}{h} \sum_{i=1}^{N} \bigg[
                \frac{\partial F_{i}}{\partial
                        q} (\psi_{i}(u))m_{i} (u_i - v_i)- \frac{\partial  F_{i}}{\partial
                        q}
                (\psi_{i}(u))m_{i} (u_{i-1} - v_{i-1})\bigg]\\
                                &= \sum_{i=1}^{N} \bigg[ \frac{\partial
                        F_{i}}{\partial
                        p}
                (\psi_{i}(u)) (\psi_i^1(u) - \psi_i^1(v)) + \frac{\partial
                        F_{i}}{\partial
                        q}
                (\psi_{i}(u)) (\psi_i^2(u) - \psi_i^2(v)) \bigg] m_i \\
                                &= \sum_{i=1}^{N} \nabla
                F_{i}
                (\psi_{i}(u)) \cdot (\psi_i(u) - \psi_i(v))
                m_i.
        \end{aligned}
\end{eqnarray*}
So,
the estimate\begin{align}
\notag
                & \sum_{i=1}^{N} \big[\big(G_i(v)-G_i(u)\big)
                m_i   + (\Ll_u^* m)_i  (u_i - v_i)\big] \\\label{mnt}
                                &\qquad =  \sum_{i=1}^{N} \big[F_i(\psi_i(v))-F_i(\psi_i(u))-\nabla
                F_{i}
                (\psi_{i}(u)) \cdot (\psi_i(v) - \psi_i(u))\big] m_i\geq 0
\end{align}
follows from the convexity of each $F_i$ and from the positivity of each $m_i$. Similarly,
\begin{eqnarray*}
        \begin{aligned}
                \sum_{i=1}^{N} \big[\big(G_i(u)
                - G_i(v)\big) \theta_i   - (\Ll_v^* \theta)_i
                (u_i - v_i)\big] \geq 0,
        \end{aligned}
\end{eqnarray*}
which concludes the proof.
\end{proof}

\begin{remark}
\label{3p7}
Because the operator $A^N$ is monotone, $(u^N, m^N, \Hh^N)$ solves 
\eqref{A=0} if and only if the condition 
\begin{equation}
\label{weaksol}
\left\langle A^N \begin{bmatrix}
\theta \\
v \\
\end{bmatrix}
-
\begin{bmatrix}\Hh^N\boldsymbol{\iota}\\\
0 \\
\end{bmatrix}
, 
\begin{bmatrix}\theta \\
v \\
\end{bmatrix}
-
\begin{bmatrix}m^N \\
u^N \\
\end{bmatrix}
 \right\rangle_{\RR^N \times \RR^N}\geq 0 
\end{equation}
holds for every
$(v, \theta)\in \RR^N \times \RR^N_+$.
\end{remark}

\begin{definition}
\label{3p8}
We say that $A^N$ is strictly monotone if \eqref{ANmon} holds with strict inequality 
whenever  $(m,u)$, $(\theta,v) \in  \RR_+^N \times
\RR^N$ satisfy $v\neq u$ and $\sum_{i=1}^{N} v=\sum_{i=1}^{N} u$. 
\end{definition}

\subsection{Uniform estimates}

Estimates that do not depend on $N$ play a major role in establishing the convergence of 
solutions of \eqref{A=0} to \eqref{1dsmfg}. Here, we prove
elementary energy estimates that are sufficient to show convergence. 

\begin{pro}
\label{uest}
Let $(u^N,m^N, \Hh^N)$ solve \eqref{A=0}.
Further assume that $\sum_{i=1}^{N} u^N_i=0$.
Then, there exists a positive constant, $C$,  independent of $N$  such that 
\begin{equation}
\label{ll2b}
\frac 1 N \sum_{i=1}^{N} (u_i^N)^2\leq C
\end{equation}
and
\begin{equation*}
\left|\Hh^N\right|\leq C. 
\end{equation*}
\end{pro}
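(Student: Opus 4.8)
The plan is to control $\phi(u^N)$ from both sides by constants independent of $N$ and then read off both estimates from this control. The key preliminary observation is that $u^N$ is a minimizer of $\phi$: the first line of \eqref{A=0} gives $\ln m^N_i = G_i(u^N) - \Hh^N$, so $m^N = e^{-\Hh^N}\big(e^{G_i(u^N)}\big)_i$ is a positive multiple of $\big(e^{G_i(u^N)}\big)_i$; since $\Ll_{u^N}^* m^N = 0$ and $\Ll_{u^N}^*$ is linear, Lemma~\ref{L33} shows that $u^N$ is a critical point of $\phi$, and convexity (Lemma~\ref{phicx}) upgrades this to a global minimizer. Hence $\phi(u^N)\le\phi(0)$. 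Using the third condition on $F$ with $p=0$ one has $G_i(0)=F(0,0,x_i)=V(x_i)$, so $\phi(0)=\frac1N\sum_{i=1}^N e^{V(x_i)}\le e^{\sup_\Tt V}=:C_0$, a bound independent of $N$.

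First I would establish \eqref{ll2b}. Since $\phi(u^N)\le C_0$ uniformly, I would rerun the energy estimate already carried out in the proof of Proposition~\ref{eos}: Jensen's inequality applied to $\phi(u^N)=\frac1N\sum_i e^{G_i(u^N)}$ gives $\frac1N\sum_i G_i(u^N)\le\ln C_0$; writing $p_i=(u^N_{i+1}-u^N_i)/h$ so that $G_i(u^N)=F(-p_i,p_{i-1},x_i)$, pairing consecutive terms and invoking \eqref{newonF} yields $G_i(u^N)+G_{i+1}(u^N)\ge -\tfrac1c+c\,p_i^2$, hence the discrete $H^1$-seminorm bound $\sum_{i=1}^N \frac{|u^N_{i+1}-u^N_i|^2}{h}\le\bar C$ with $\bar C$ independent of $N$; finally a telescoping-plus-Cauchy--Schwarz argument, combined with the constraint $\sum_i u^N_i=0$, upgrades this to $\max_i|u^N_i|\le\bar C^{1/2}$. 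Estimate \eqref{ll2b} is then immediate.

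It remains to bound $\Hh^N$. Exponentiating $\ln m^N_i=G_i(u^N)-\Hh^N$ and summing against the normalization $\frac1N\sum_i m^N_i=1$ gives $e^{-\Hh^N}\,\frac1N\sum_i e^{G_i(u^N)}=1$, that is $\Hh^N=\ln\phi(u^N)$. The upper bound $\phi(u^N)\le C_0$ controls $\Hh^N$ from above. For the lower bound, Jensen's inequality gives $\phi(u^N)\ge\exp\!\big(\frac1N\sum_i G_i(u^N)\big)$, and summing the same pairing $G_i(u^N)+G_{i+1}(u^N)\ge-\tfrac1c+c\,p_i^2$ over all $i$ yields $\frac1N\sum_i G_i(u^N)\ge-\tfrac1{2c}$; hence $\phi(u^N)\ge e^{-1/(2c)}>0$ uniformly in $N$, and $|\Hh^N|$ is bounded independently of $N$. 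The one delicate point, and the place where the structural hypothesis \eqref{newonF} is essential, is the two-sided, $N$-independent control of $\frac1N\sum_i G_i(u^N)$: the upper side rests on $u^N$ being a minimizer, while the lower side uses the coercivity encoded in \eqref{newonF}. Everything else is the routine discrete calculus already present in Proposition~\ref{eos}.
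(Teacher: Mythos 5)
Your proof is correct, and for the bound \eqref{ll2b} it coincides with the paper's: both reduce to $\phi(u^N)\leq\phi(0)$ via Lemma~\ref{L33} and convexity, and then rerun the Jensen/coercivity/telescoping chain from Proposition~\ref{eos}. Where you genuinely diverge is in the bound for $\Hh^N$. You observe that the first block of \eqref{A=0} together with the normalization $h\sum_i m_i^N=1$ forces the exact identity $\Hh^N=\ln\phi(u^N)$, after which the upper bound is immediate from minimality ($\phi(u^N)\leq\phi(0)\leq e^{\sup_\Tt V}$) and the lower bound follows from Jensen plus the pairing $G_i(u^N)+G_{i+1}(u^N)\geq-\tfrac1c+c\,p_i^2$ coming from \eqref{newonF}. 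The paper's lower bound is in substance the same as yours (it amounts to $\Hh^N\geq\frac1N\sum_iG_i(u^N)\geq-C$, with the coercivity of $G_i$ invoked more loosely than your explicit use of \eqref{newonF}, so your version is actually the more careful one). For the upper bound, however, the paper does not use the identity $\Hh^N=\ln\phi(u^N)$: it pairs the system with $(m,-u)$, applies Jensen to $\frac1N\sum_i m_i\ln m_i\geq 0$, and uses the convexity inequality $G_i(u)-(\Ll_u u)_i\leq G_i(0)$. Your route is shorter and exploits the variational structure head-on; the paper's duality-pairing argument is heavier here but is the one that survives in monotone, non-variational settings where $\Hh^N$ admits no closed-form expression in terms of an energy. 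Both are valid for the proposition as stated.
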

\begin{proof}
By Lemma~\ref{L33}, we have that \(\phi(u^N) \leq C\), where \(C=
\phi(0)\). Then, arguing as in the proof of Proposition \ref{eos}, we obtain the $\ell^2$ bound in \eqref{ll2b}.

The bound for $\Hh^N$ is proven in two steps. First, we have
\[
\frac 1 N \sum_{i=1}^{N} G_i(u^N)=\Hh^N+\frac 1 N \sum_{i=1}^{N} \ln (m_i^N)\leq \Hh^N 
\]
by Jensen's inequality. Because $G_i$ is bounded from below, we obtain
\[
\Hh^N\geq -C
\]
for some  constant $C\geq0$  independent
of $N$. Second, for each \(i\in \{1,..., N\}\), we multiply the $i$th equation in \eqref{A=0} by $m_i$ and the \((N+i)\)th equation by $-u_i$. Adding the resulting expressions
and summing over $i$, we get
\[
\frac 1 N \sum_{i=1}^{N} m_i\left(G_i(u)-(\Ll_u u)_i\right)=\Hh^N+\frac 1 N \sum_{i=1}^{N} m_i \ln m_i\geq \Hh^N 
\]
by Jensen's inequality. By the concavity of $G_i(u)$, we have
\[
G_i(u)-(\Ll_u u)_i=G_i(u)+(\Ll_u (0-u))_i\leq G_i(0).
\]
Hence, $\Hh^N\leq C$ for some constant \(C>0\)
independent of $N$.         
\end{proof}

\begin{remark}
The proof of the previous proposition gives an $\ell^\infty$ bound for $u_i$, not just the $\ell^2$ bound in \eqref{ll2b}. 
However, the technique used in the proof is one-dimensional since it is similar to the proof of the one-dimensional Morrey's theorem. 
As stated in the proposition,  inequality \eqref{ll2b} is a discrete version of the Poincar\'e inequality;  this inequality holds in any dimension. 
Finally,  
for our purposes, \eqref{ll2b} is sufficient. 
\end{remark}

\subsection{Convergence}

Here, we show the convergence of solutions of \eqref{A=0} to weak solutions of \eqref{1dsmfg}.

\begin{proposition}
For $N\in \Nn$, let $(u^N,m^N, \Hh^N)\in \Rr^N\times \Rr^N\times \RR$ be a solution of \eqref{A=0}. Denote by $\bar u^N$ the step function in $[0,1]$
that takes the value $u^N_i$    in the interval $\left[\frac {i-1} N, \frac i N\right]$ for $1\leq i\leq N$. Similarly, 
$\bar m^N$ is the step function associated with $m^N$. 
Then,  up to a (not relabeled) subsequence,  $\Hh^N\to \Hh$ in $\Rr$, $\bar u^N\rightharpoonup \bar u$ in $L^2([0,1])$, and
$\bar m^N\rightharpoonup \bar m$ in $\mathcal{P}([0,1])$. Moreover, 
$(\bar m, \bar u, \Hh)$ is a weak solution of \eqref{1dsmfg}.   
\end{proposition}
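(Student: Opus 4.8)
The plan is to combine the $N$-independent bounds of Proposition~\ref{uest} with a Minty-type argument that exploits the monotonicity of $A^N$ recorded in Remark~\ref{3p7}. The decisive feature of the monotone formulation \eqref{weaksol} is that the nonlinear operator $A^N$ is evaluated only at the test pair and never at the solution; this is what will permit passing to the limit even though $(\bar m^N,\bar u^N)$ converges only weakly.

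First I would extract the convergent subsequences. The bound $|\Hh^N|\leq C$ gives, up to a subsequence, $\Hh^N\to\Hh$ in $\RR$. The Poincar\'e-type estimate \eqref{ll2b}, namely $\|\bar u^N\|_{L^2([0,1])}^2=\frac1N\sum_{i=1}^N (u_i^N)^2\leq C$, yields a further subsequence with $\bar u^N\rightharpoonup\bar u$ in $L^2([0,1])$. Each $\bar m^N$ is a probability density on $[0,1]$ (its mass on the $i$th cell is $h\,m_i^N$, and $h\sum_i m_i^N=1$), so, since $[0,1]$ is compact, $\mathcal P([0,1])$ is weak-$*$ sequentially compact and we may assume $\bar m^N\rightharpoonup\bar m$. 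Recalling the normalization $\sum_i u_i^N=0$, the constraints $\int_0^1\bar u^N\,\dx=0$ and $\int_0^1\bar m^N\,\dx=1$ pass to the limit, so $\int_\Tt\bar u\,\dx=0$ and $\bar m\in\mathcal P([0,1])$.

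Next, fix a smooth pair $(\theta,v)$ with $\inf_\Tt\theta>0$ and $\int_\Tt\theta\,\dx=1$, and sample it on the grid, $\theta_i^N=\theta(x_i)$ and $v_i^N=v(x_i)$, so that $\theta^N\in\RR_+^N$; by Remark~\ref{3p7}, inequality \eqref{weaksol} holds for this test pair. The analytic core is the consistency of the scheme on smooth inputs: since $\psi_i(v^N)\to(-v_x(x_i),v_x(x_i))$, property~(3) in the definition of $F$ gives $-G_i(v^N)+\ln\theta_i^N\to-\tfrac{v_x^2}{2}-V-bv_x+\ln\theta$ uniformly in $i$, while the remark after Lemma~\ref{L33} gives $(\Ll^*_{v^N}\theta^N)_i\to-(\theta(v_x+b))_x$ uniformly in $i$. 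I would then expand the inner product in \eqref{weaksol} and pass to the limit factor by factor, with the $h$-weighting that turns discrete sums into integrals: the uniformly convergent coefficients, paired against $\theta^N$ and $v^N$, give Riemann sums converging to the corresponding integrals; paired against $m^N$ they converge to the integral against $\bar m$ by weak-$*$ convergence of $\bar m^N$, the uniform consistency error being harmless since $\bar m^N$ carries unit mass; paired against $u^N$ they converge by the weak $L^2$ convergence of $\bar u^N$, the bound $\|\bar u^N\|_{L^2}\leq C$ absorbing the consistency error. The limit of \eqref{weaksol} is exactly the inequality defining a weak solution in Section~\ref{wsd}; since $(\theta,v)$ was an arbitrary smooth admissible pair, $(\bar m,\bar u,\Hh)$ is a weak solution of \eqref{1dsmfg}.

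I expect the main obstacle to be the uniform consistency of $G$ and of $\Ll^*$ on smooth arguments, together with the bookkeeping that matches each discrete sum in \eqref{weaksol} with the correct continuous pairing --- a Riemann sum, an integral against the measure $\bar m$, or a weak $L^2$ pairing against $\bar u$. The conceptual difficulty, circumvented by design, is that one cannot pass a weak limit through the nonlinearities $G_i(\cdot)$ and $\ln(\cdot)$; keeping them on the smooth test functions via the monotone formulation of Remark~\ref{3p7} is precisely what makes the limit tractable.
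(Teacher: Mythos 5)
Your proposal is correct and follows essentially the same route as the paper: extract subsequences from the uniform bounds of Proposition~\ref{uest}, test the monotone inequality \eqref{weaksol} of Remark~\ref{3p7} against grid samples of smooth admissible pairs $(\theta,v)$, and pass to the limit using consistency of the scheme together with the weak convergences. The paper compresses the term-by-term consistency bookkeeping you describe into a single $O\left(\frac 1 N\right)$ error term, but the argument is the same.
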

\begin{proof}
According to Proposition \ref{uest}, $|\Hh^N|$ and $\|\bar u^N\|_{L^2([0,1])}$ are uniformly bounded with respect to \(N\). Moreover, $\|\bar m^N\|_{L^1([0,1])}=1$ by construction. Therefore, 
there exist $\Hh\in \Rr$, $\bar u \in L^2([0,1])$, and $\bar m\in \mathcal{P}([0,1])$ such that $\Hh^N\to \Hh$ in $\Rr$, $\bar u^N\rightharpoonup \bar u$ in $L^2([0,1])$, and
$\bar m^N\rightharpoonup \bar m$ in $\mathcal{P}([0,1])$, up to a (not relabeled) subsequence. 

Select $v, \theta \in C^\infty(\Tt)$ satisfying
$\theta> 0$ and $\int_\Tt \theta\,\dx=1$. Set $v^N_i=v\left(\frac i N\right)$ and $\theta^N_i=\theta\left(\frac i N\right)$.
Then, by Remark \ref{3p7}, 
\begin{align*}
0&\leq \left\langle A^N \begin{bmatrix}
        \theta^N \\
        v^N \\
\end{bmatrix}
-
\begin{bmatrix}\Hh^N\\
        0 \\
\end{bmatrix}
, 
\begin{bmatrix}\theta^N \\
        v^N \\
\end{bmatrix}
-
\begin{bmatrix}m^N \\
        u^N \\
\end{bmatrix}
\right\rangle_{\RR^N \times \RR^N}
\\
&=
O\left(\frac 1 N\right)+
\left\langle A \begin{bmatrix}
\theta \\
v \\
\end{bmatrix}
-
\begin{bmatrix}\Hh^N\\
0 \\
\end{bmatrix}
, 
\begin{bmatrix}\theta \\
v \\
\end{bmatrix}
-
\begin{bmatrix}\bar m^N \\
\bar u^N \\
\end{bmatrix}
\right\rangle_{\Dd\times\Dd'}.
\end{align*}
The proposition follows by letting $N\to \infty$ in this last expression.  
        \end{proof}

\subsection{A discrete gradient flow}

To approximate \eqref{A=0}, we consider two approaches.
Here, we discuss 
a gradient-flow approximation. Later, we examine a monotonicity-based
method. 

The discrete-time gradient flow is
\begin{equation}
\label{gflow}
\dot \mbfu=-(\Ll_{\mbfu^*} \tilde \mbfm)_i, 
\end{equation}
where $\tilde \mbfm_i=e^{G_i(\mbfu)}$.
Because $\phi$ is convex, $\phi(\mbfu(t))$ is decreasing. Moreover, the proof of proposition \eqref{eos} shows that $\phi$ is coercive
on the set $\sum_{i=1}^N \mbfu_i=0$. Note that \eqref{gflow} satisfies
\[
\frac \d {\dt} \sum_{i=1}^{N} \mbfu_i=0.
\]
Consequently, $\mbfu(t)$ is bounded and converges to a critical point of $\phi$. Finally, we obtain a solution 
to \eqref{A=0} by normalizing $ \tilde \mbfm_i$. 

\subsection{Dynamic approximation}

        We can use  the monotonicity of  $A^N$  to build a contracting flow in $L^2$ whose fixed points satisfy \eqref{A=0}. This flow
is
\[
\begin{bmatrix}
\dot \mbfm\\
\dot \mbfu
\end{bmatrix}
=
-A^N
\begin{bmatrix}
\mbfm\\
\mbfu
\end{bmatrix}
+
\begin{bmatrix}
\Hh^N(t) \boldsymbol{\iota}\\
0
\end{bmatrix}, 
\]
where $\Hh^N(t)$ is such that the total mass is preserved; that is,
\[
\sum_{i=1}^{N} \dot \mbfm_i=0.
\]
Due to the logarithmic nonlinearity, $\mbfm(t)
> 0$ for all $t$. We further observe that 
\[
\frac \d {\dt} \sum_{i=1}^{N} \mbfu_i=0.
\]
Moreover, if $(\bar m^N, \bar u^N, \Hh^N)$ is a solution of \eqref{A=0}, then the monotonicity of $A^N$ implies that
\[
\frac \d {\dt} \left(\|\mbfm-\bar m^N\|^2+\|\mbfu-\bar u^N\|^2\right)\leq 0. 
\]
Furthermore, if strong monotonicity holds (see Definition~\ref{3p8}), 
the preceding inequality is strict if $(\mbfm, \mbfu)\neq (m^N, u^N)$. 
In this case, $(\mbfm(t), \mbfu(t))$ is globally bounded and converges to $(m^N, u^N)$. Finally, 
this implies that $\Hh(t)$ converges to $\Hh^N$.

\section{Numerical results}
\label{impsec}

Here, we discuss the implementation of our numerical methods, the corresponding results, and some extensions.

In our numerical examples, we construct 
$F$ as follows. First, we build
\[
F^Q(p,q)=\frac 1 2 \left(\max\{p,q,0\}\right)^2
\]
and
\[
F^D(p,q,x)=
\begin{cases}
-b(x)p & \text{if } b(x)\leq 0,\\
b(x)q & \text{otherwise}.
\end{cases}
\]
We set
\[
F(p,q,x)=F^Q(p,q)+F^D(p,q,x)+V(x). 
\]
Then, $F_i$ is given by \eqref{Fi}.

We implemented our algorithms in MATLAB and Mathematica with no significant differences in performance or numerical results. We present here the computations performed with the Mathematica code. To solve the ordinary differential equations, we used the built-in Mathematica ODE solver with the stiff backward difference formula (BDF) discretization of variable order.

\subsection{Gradient flow}

For the gradient flow, we took $u(x,0)=0.2 \cos(2 \pi x)$ as the initial condition for $u$. We used $N=100$.
We set $b=0$ and $V(x)=\sin(2 \pi x)$.
 Figures \ref{u1dgrad} and~\ref{m1dgrad} feature the evolution of $u$ and \(m\), respectively, for $0\leq t\leq 1$. We can observe a fast convergence to the stationary solution $u=0$. Figure \ref{exactm1dgrad} illustrates
the behavior of $m$ at equally spaced times and compares it to the exact solution (in black).

        \begin{figure}[htb!]
                \begin{center}
                        \includegraphics[width=0.6\textwidth]{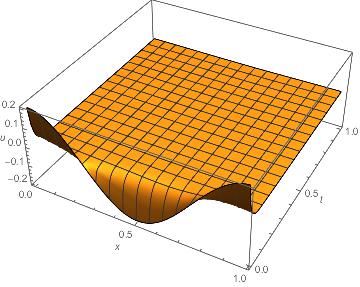}
                        \caption{Gradient
flow: $u$ }\label{u1dgrad}
                \end{center}
        \end{figure}

        \begin{figure}[htb!]
                \begin{center}
                        \includegraphics[width=0.6\textwidth]{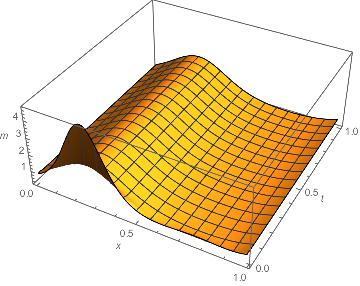}
                        \caption{Gradient
flow: $m$ }\label{m1dgrad}
                \end{center}
        \end{figure} 
        
        \begin{figure}[htb!]
                \begin{center}
                        \includegraphics[width=0.6\textwidth]{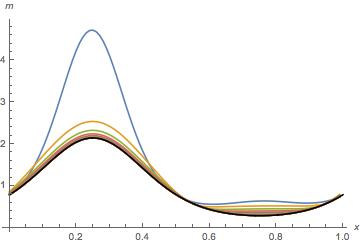}
                        \caption{Gradient flow:  $m$ numeric  for $0\leq t\leq 0.1$ vs.\! exact
(black line)}\label{exactm1dgrad}
                \end{center}
        \end{figure}

\subsection{Monotonic flow }

Here, we present the numerical results for the monotonic flow.
We set, as before, $b=0$ and $V(x)=\sin(2 \pi x)$.
We used $u(x,0)=0.2 \cos(2 \pi x)$ and 
$m(x,0)=1+0.2 \cos(2 \pi x)$  as initial conditions for $u$ and $m$, respectively.
As previously, we used $N=100$. Figures \ref{u1d} and \ref{m1d} depict the convergence to stationary solutions for $0\leq t\leq 10$. Figure \ref{exactm1d} compares the values of $m$ at equally spaced times with the stationary solution. Finally, Figures \ref{u1db} and \ref{m1db} illustrate the solution $(u,m)$ for the case $b=\cos^2(2 \pi x)$.

        \begin{figure}[htb!]
                \begin{center}
                        \includegraphics[width=0.6\textwidth]{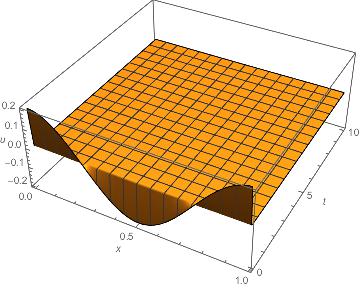}
                        \caption{Monotonic flow: $u$}\label{u1d}
                \end{center}
        \end{figure}

\begin{figure}[htb!]
        \begin{center}
                \includegraphics[width=0.6\textwidth]{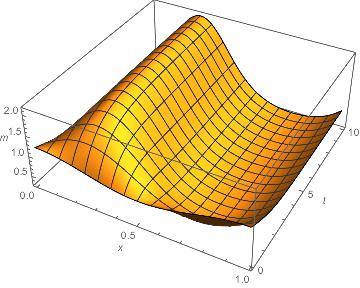}
                \caption{Monotonic flow: $m$}\label{m1d}
        \end{center}
\end{figure}

        \begin{figure}[htb!]
                \begin{center}
                        \includegraphics[width=0.6\textwidth]{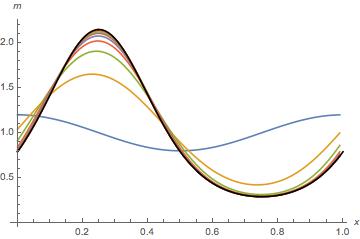}
                        \caption{Monotonic flow: $m$ numerical vs.\! exact (black line) for $t=0, 1,\hdots, 10$}\label{exactm1d}
                \end{center}
        \end{figure}

\begin{figure}[htb!]
        \begin{center}
                \includegraphics[width=0.6\textwidth]{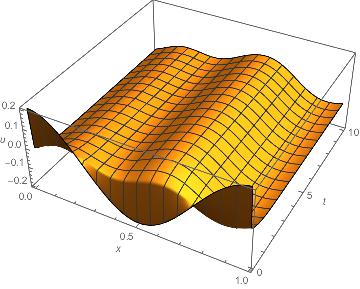}
                \caption{Monotonic flow: $u$ with  $b=\cos^2(2 \pi x)$}\label{u1db}
        \end{center}
\end{figure}

\begin{figure}[htb!]
        \begin{center}
                \includegraphics[width=0.6\textwidth]{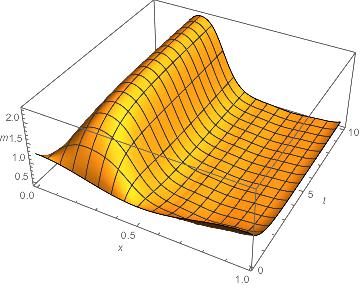}
                \caption{Monotonic flow: $m$  with  $b=\cos^2(2 \pi x)$}\label{m1db}
        \end{center}
\end{figure} 

\subsection{Application to congestion problems}

Our methods are not restricted to \eqref{1dsmfg} nor to one-dimensional problems. Here, we consider the congestion problem \eqref{1dsmfgcon} and present the corresponding numerical results. We examine higher-dimensional problems in the next section. 

The  congestion problem \eqref{1dsmfgcon} satisfies the monotonicity condition (see \cite{GMit}). Moreover, this problem admits the same 
explicit solution as \eqref{1dsmfg} with $b=0$. We chose $V(x)=\sin(2 \pi x)$, for comparison. 

We took the same
 initial conditions as in the previous section and set $N=100$. 
We present the evolution of $u$ and $m$ in Figures \ref{u1dcon} and \ref{m1dcon}, respectively. 
In Figure \ref{exactm1dcon}, we superimpose the exact solution, $m$, on the numerical values of $m$ at equally spaced times.

        \begin{figure}[htb!]
                \begin{center}
                        \includegraphics[width=0.6\textwidth]{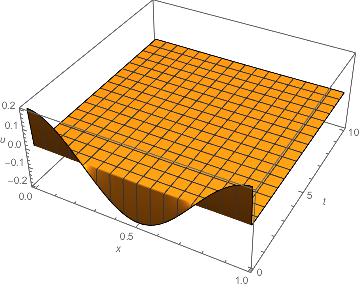}
                        \caption{Congestion model:
                         $u$}\label{u1dcon}
                \end{center}
        \end{figure}

        \begin{figure}[htb!]
                \begin{center}
                        \includegraphics[width=0.6\textwidth]{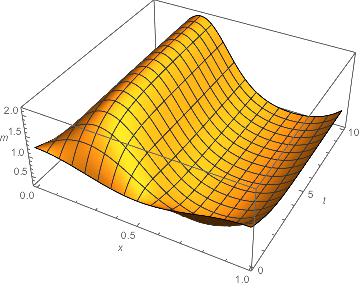}
                        \caption{Congestion model:
                        $m$}\label{m1dcon}
                \end{center}
        \end{figure} 

        \begin{figure}[htb!]
                \begin{center}
                        \includegraphics[width=0.6\textwidth]{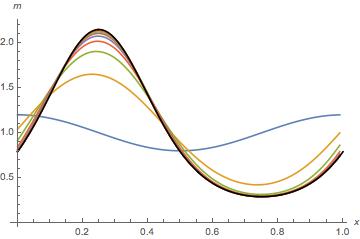}
                        \caption{Congestion model: $m$ numerical vs.\! exact (black line) for $t=0, 1,\hdots, 10$}\label{exactm1dcon}
                \end{center}
        \end{figure}

\subsection{Higher-dimensional examples}

The last example, we consider the following two-dimensional version of \eqref{1dsmfg}:
\begin{eqnarray}
\label{2d}
\begin{aligned}
\begin{cases}
\displaystyle \frac{w_x^2}{2}+ \frac{w_y^2}{2} + W(x,y) =\ln m+\Hh,\\
-(\theta (w_x))_x-(\theta (w_y))_y =0, 
\end{cases}
\end{aligned}
\end{eqnarray}
with $W(x,y)=\sin(2 \pi x)+\sin(2 \pi y)$. Because $W(x,y)=V(x)+V(y)$ for $V(x)=\sin(2 \pi x)$, the solution to \eqref{2d} takes the form $w(x,y)=u(x)+u(y)$
and $\theta(x,y)=m(x)+m(y)$ where $(u,m)$ solves \eqref{1dsmfg} with $b=0$.

We chose $w(x,y,0)=0.4 \cos(x+ 2 y)$, $\theta(x,y,0)=1+0.3 \cos(x- 3 y)$, and $N=20$. 
Figure~\ref{m2d} illustrates  $\theta$ at $T=50$. The numerical errors for   $\theta$ and $w$ are shown in Figures~\ref{m2derror} and \ref{u2derror},
respectively.

        \begin{figure}[htb!]
                \begin{center}
                        \includegraphics[width=0.6\textwidth]{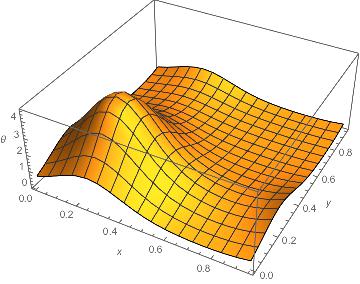}
                        \caption{Two-dimensional
                        problem: numeric $\theta$ 
at $T=50$}\label{m2d}
                \end{center}
        \end{figure}

        \begin{figure}[htb!]
                \begin{center}
                        \includegraphics[width=0.6\textwidth]{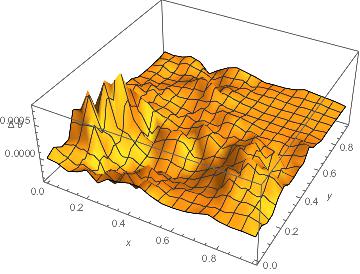}
                        \caption{Two-dimensional
                        problem: $\theta$ numerical error}\label{m2derror}
                \end{center}
        \end{figure}

        \begin{figure}[htb!]
                \begin{center}
                        \includegraphics[width=0.6\textwidth]{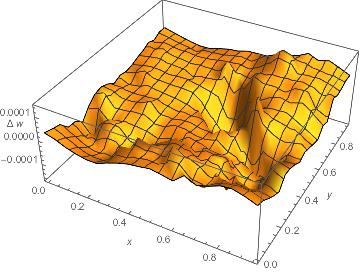}
                        \caption{Two-dimensional
                        problem: $w$ numerical error}\label{u2derror}
                \end{center}
        \end{figure}

\section{Final Remarks}
\label{conclusions}

Here, we developed two numerical methods to approximate solutions of stationary mean-field games. We addressed the convergence  of a discrete version of \eqref{1dsmfg}, and the convergence to weak solutions through a monotonicity argument. Our techniques generalize to discretized systems that are monotonic,  and that admit uniform bounds   with respect to the discretization parameter. 

In the cases we considered, our methods approximate well the exact solutions. While the gradient flow is considerably faster than the monotonic flow, this last  method applies to a wider class of problems.

We selected a simple  model for illustration purposes. In our numerical
examples, however, we illustrated the convergence of the schemes in higher-dimensional
problems and congestion MFG problems.
Furthermore, 
our results can be easily extended to related problems -- higher-dimensional cases, second-order MFG, or non-local (monotonic) problems.  Additionally, our methods provide a natural guide for two future research directions. The first is the development of a general theory of convergence for monotone schemes and the extension of our methods to mildly non-monotonic MFG. The second
is the study of time-dependent MFG.
This last direction is particularly relevant since the coupled structure of MFG and the initial-terminal conditions that are usually imposed make these problems very challenging from the numerical point of view.

\bibliographystyle{plain}

\bibliography{mfg}

\end{document}